\newtheorem{thm}{Theorem}[section]
\newtheorem{cor}[thm]{Corollary}
\newtheorem{lem}[thm]{Lemma}
\newtheorem{prop}[thm]{Proposition}
\theoremstyle{definition}
\newtheorem{defn}[thm]{Definition}
\theoremstyle{remark}
\newtheorem{rem}[thm]{Remark}
\numberwithin{equation}{section}
\begin{document}
	
	\title[New Gaussian Riesz transforms on variable Lebesgue spaces]{New Gaussian Riesz transforms on variable Lebesgue spaces}
	
	%Information for  first author
	\author[E. Dalmasso]{Estefan\'ia Dalmasso}
	\address{Estefan\'ia Dalmasso\newline
		Instituto de Matem\'atica Aplicada del Litoral, UNL, CONICET, FIQ.\newline Colectora Ruta Nac. N$^\circ$ 168, Paraje El Pozo,\newline S3007ABA, Santa Fe, Argentina}
	\email{edalmasso@santafe-conicet.gov.ar}

	% Information for second author
	\author[R. Scotto]{Roberto Scotto}
	\address{Roberto Scotto\newline
		Universidad Nacional del Litoral, FIQ.\newline Santiago del Estero 2829,\newline S3000AOM, Santa Fe, Argentina}
	\email{roberto.scotto@gmail.com}

	\thanks{The authors were supported by CAI+D (UNL)}
	
	\subjclass[2020]{Primary 42B20, 42B25, 42B35; Secondary 46E30, 47G10.}
	
	\keywords{Riesz transforms; non-centered maximal function; Ornstein-Uhlenbeck semigroup; Gaussian measure; variable Lebesgue spaces.}

	%%% ----------------------------------------------------------------------
	
	\begin{abstract}We give sufficient conditions on the exponent $p: \mathbb R^d\rightarrow [1,\infty)$ for the boundedness of the non-centered Gaussian maximal function on variable Lebesgue spaces $L^{p(\cdot)}(\gamma_d)$, as well as of the new higher order Riesz transforms associated with the Ornstein-Uhlenbeck semigroup, which are the natural extensions of the supplementary first order Gaussian Riesz transforms defined by A. Nowak and K. Stempak in \cite{nowakstempak}. 
	\end{abstract}
	
	%%% ----------------------------------------------------------------------
	\maketitle
	%%% ----------------------------------------------------------------------

\section{Introduction}

 Gaussian harmonic analysis on $\mathbb{R}^d$ is represented by a differential operator called Ornstein-Uhlenbeck which is defined as
\begin{equation*}
\mathcal L:=-\frac{1}{2}\Delta + x\cdot \nabla,
\end{equation*}
where $\Delta =\sum_{i=1}^d \frac{\partial^2}{\partial x_i^2}$ is the Laplacian and $\nabla=\left(\frac{\partial}{\partial x_i}\right)_{i=1}^d$  is the classical gradient.
This operator factors out on each variable into  two derivatives as follows. Indeed, by naming $\delta_i=\frac{1}{\sqrt{2}}\frac{\partial}{\partial x_i}$  and 
$\delta^*_i=-\frac{1}{\sqrt{2}}e^{|x|^2}\frac{\partial }{\partial x_i}\left( e^{-|x|^2}\cdot \right)$, the formal adjoint of $\delta_i$ with respect to the $d$-dimensional non-standard Gaussian measure $d\gamma_d(x)=\frac{e^{-|x|^2}}{\pi^{d/2}}dx$,  we have the differential operator $\mathcal L_i=\delta_i^*\delta_i,$ and
\[\mathcal L=\sum_{i=1}^d \mathcal L_i.\]

Let us remark that $\mathcal L$ is an unbounded non-negative symmetric operator on $L^2(\gamma_d)$. Besides, there is a dense linear subspace of this space where $\mathcal L$ turns out to be a self-adjoint operator (see \cite{Grigoryan}). It has a discrete spectrum $\sigma(\mathcal L)=\{0,1,2, 3,\ldots\}:=\mathbb{N}_0$ which are all eigenvalues for $\mathcal{L}$ and its eigenfunctions are the $d$-dimensional Hermite polynomials (see, for instance, \cite{Davies} for the one-dimensional case, and \cite{Urbina} for higher dimensions).

Following the notation of \cite{nowakstempak} we have $d$ more differential operators which are associated with the $\delta_i^*$-derivatives. For $i=1,\dots,d$ let us define
\begin{equation*}
M_i:=\mathcal L+[\delta_i,\delta_i^*]I,
\end{equation*}
where $I$ is the identity operator and $[\delta_i,\delta_i^*]=\delta_i\delta_i^*-\delta_i^*\delta_i$ is the commutator corresponding to the $i$-th derivatives. In this case, $[\delta_i,\delta_i^*]=1$ for all $i.$ Thus $M_1=M_2=\cdots =M_d=:\bar{\mathcal L}=\mathcal L+I.$ This differential operator has also a discrete spectrum, being $\sigma(\overline{\mathcal L})=\mathbb{N}.$

Associated with these two differential operators there exist two diffusion semigroups, i.e. $T^t=e^{-\mathcal Lt}$ and $\overline{T}^t=e^{-\overline{\mathcal L}t},$ which are defined through the spectral decomposition of $\mathcal L$ and $\overline{\mathcal L}$ on $L^2(\gamma_d)$, respectively.

In order to define the Gaussian Riesz transforms, in this article we are going to consider two different transforms. We need the fractional integrals, say, for $\beta>0,$ 
\begin{align*} I_\beta&=\mathcal L^{-\beta}=\frac{1}{\Gamma(\beta)}\int_0^\infty t^{\beta-1}T^t dt  \\
\overline{I}_\beta &=\overline{\mathcal L}^{-\beta}=\frac{1}{\Gamma(\beta)}\int_0^\infty t^{\beta-1}\overline{T}^t dt.
\end{align*} 
We set $I_0=\bar{I}_0=I.$

Now we define the two $i$-th Gaussian Riesz transforms (see \cite{nowakstempak}). For $i=1,\dots,d$, let
\begin{align*}
R_i f(x)&=\delta_i I_{\frac12}f(x),\\
R_i^*f(x)&=\delta_i^* \overline{I}_{\frac12}f(x).
\end{align*}
Like in classical harmonic analysis, these Gaussian Riesz transforms verify the following equation
\begin{equation*}
\sum_{i=1}^d R_i^* R_i=I	
\end{equation*}
on $L^2(\gamma_d),$ that is, they decompose the identity.

Now, we want to define higher order Gaussian Riesz transforms that retain this property. Let us introduce some notation.
For a multi-index $\alpha=(\alpha_1,\cdots, \alpha_d)\in \mathbb{N}_0^d,$ we define $|\alpha|=\alpha_1+\alpha_2+\cdots+\alpha_d,$  $\delta^\alpha=\delta_1^{\alpha_1}\delta_2^{\alpha_2}\cdots \delta_d^{\alpha_d},$ and similarly $\delta_*^\alpha=(\delta_1^*)^{\alpha_1}(\delta_2^*)^{\alpha_2}\cdots (\delta_d^*)^{\alpha_d}$, where we set $\delta_i^0=(\delta_i^*)^0=I$ for $i=1,\dots,d.$
Thus, we are ready to define the higher order Gaussian Riesz transforms as follows
\begin{align*}
R_\alpha f(x)&=\delta^\alpha I_{\alpha_1/2}I_{\alpha_2/2}\cdots I_{\alpha_d/2}f(x), \\
R^*_\alpha f(x)&=\delta_*^\alpha \overline{I}_{\alpha_1/2}\overline{I}_{\alpha_2/2}\cdots \overline{I}_{\alpha_d/2}f(x).
\end{align*}
Taking into account that $I_\beta I_\epsilon=I_{\beta+\epsilon}$ and $\overline{I}_\beta \overline{I}_\epsilon=\overline{I}_{\beta+\epsilon},$ we can rewrite the Gaussian Riesz transforms as
\begin{align*}
R_\alpha f(x)&=\delta^\alpha I_{|\alpha|/2}f(x), \\
R^*_\alpha f(x)&=\delta_*^\alpha \overline{I}_{|\alpha|/2}f(x).
\end{align*}
Let us remark that $R_{e_i}=R_i$ and $R^*_{e_i}=R_i^*$, where $e_i$ is the $i$-th canonical unit vector of $\mathbb{N}_0^d$. Let us also note that W. Urbina-Romero in \cite{Urbina} (see also \cite{NPU}) has defined alternative higher order Gaussian Riesz transforms $\overline{R}_\beta$ but he does not recover the suitable supplementary first order Gaussian Riesz transforms $R_i^*$ given by A. Nowak and K. Stempak \cite{nowakstempak} when $\beta=e_i$.

We will refer to $R_\alpha$ as the ``old'' Gaussian Riesz transform, and to $R_\alpha^*$ as the ``new'' Gaussian Riesz transform. The reason why we are considering the word ``new'' added to the higher order Gaussian Riesz transforms is because they  were firstly used in \cite{AFS}, in order to distinguish them from the existed first ones which were extensively studied before. The difference between them is in the choice of the derivatives in which the Ornstein-Uhlenbeck differential operator is factored out.

The operator $R_\alpha$ turns out to be bounded on $L^p(\gamma_d),$ for $1<p<\infty,$ with constant independent of dimension (see \cite{meyer}, \cite{gutierrezsegoviatorrea}, \cite{forzaniscottourbina}). For the first order Gaussian Riesz transforms $R_i^*$, $i=1,\dots,d$, $L^p(\gamma_d)$-dimension-free estimates were obtained in \cite{FSS} and \cite{Wrobel}, for $1<p<\infty$.  By means of Meyer's multiplier theorem, the ``new'' higher order Riesz transforms are also bounded on $L^p(\gamma_d)$, as can be proved similarly to \cite[Corollary 9.14]{Urbina}, with constant independent of dimension. According to \cite{bogachev18}, the Euclidean space $\mathbb{R}^d$ can be extended to an infinite-dimensional real Hausdorff locally convex space $X$, where one can introduce a standard Gaussian measure $\gamma$. In this context we can define the analogous diffusion semigroup $T^t$ whose infinitesimal generator is the Ornstein-Uhlenbeck operator $\mathcal{L}=\text{div}_\gamma D_H$, being $D_H$ the gradient on a Cameron-Martin space $H$. Taking into account these derivatives and the potentials associated with the corresponding Sobolev spaces, one can define singular integrals on this context and the boundedness of them on $L^p(\gamma)$ can be obtained from their boundedness on $L^p(\gamma_d)$, with constants independent of dimension.

In \cite{DS}, we have proved that each $R_\alpha$ is bounded on variable Lebesgue spaces with respect to the non-standard   Gaussian measure. Here, we cannot obtain a constant independent of dimension. It is an open problem to find a technique similar to the Littlewood-Paley one that gives a boundedness independent of dimension.  Inspired by \cite{DS}, the main aim of the present article is to show the following boundedness property.
\begin{thm}\label{thm: main}The new Gaussian Riesz transforms $R_\alpha^*$ are bounded on $L^{p(\cdot)}(\gamma_d)$ provided that $p^->1$ and $p\in LH_0(\mathbb{R}^d)\cap\mathcal{P}_{\gamma_d}^\infty(\mathbb{R}^d)$.
\end{thm}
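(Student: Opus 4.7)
The plan is to follow the local/global decomposition strategy that we developed for the ``old'' transforms $R_\alpha$ in \cite{DS}, tracking carefully the new features introduced by the adjoint derivatives $\delta_i^*$. The starting point is the kernel representation
\[
R_\alpha^* f(x) = \frac{1}{\Gamma(|\alpha|/2)}\int_0^\infty t^{|\alpha|/2-1}\,\delta_*^\alpha \overline{T}^t f(x)\,dt,
\]
combined with the identity $\overline{T}^t = e^{-t}T^t$ (since $\overline{\mathcal L} = \mathcal L + I$) and the Mehler representation of $T^t$. After moving $\delta_*^\alpha$ inside the integral and through the Mehler kernel, one obtains an explicit expression $R_\alpha^* f(x) = \int K_\alpha^*(x,y) f(y)\,dy$ for a Calderón--Zygmund--type kernel adapted to the Gaussian setting. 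I would then split this operator into a local piece and a global piece according to whether $(x,y)$ lies in the admissibility region $|x-y|\le C\min\{1,\,1/|x|\}$ or not.

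For the local part I would verify that the restricted kernel satisfies the standard size and smoothness estimates on admissible balls. Since on such balls the Gaussian density $e^{-|y|^2}$ is essentially constant, boundedness on $L^{p(\cdot)}(\gamma_d)$ reduces to boundedness on $L^{p(\cdot)}(dx)$ of a local Calderón--Zygmund operator, which follows from the theory of variable exponent spaces once $p\in LH_0(\mathbb{R}^d)$ and $p^->1$. The $L^p(\gamma_d)$-boundedness of $R_\alpha^*$ that follows from Meyer's multiplier theorem (as recalled in the introduction) supplies the initial $L^{p_0}(\gamma_d)$ estimate needed to launch this machinery.

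For the global part I would seek a pointwise estimate of the form $|R_\alpha^{*,\mathrm{glob}} f(x)| \lesssim \mathcal{M}_\gamma |f|(x)$, where $\mathcal{M}_\gamma$ denotes the non-centered Gaussian maximal function whose $L^{p(\cdot)}(\gamma_d)$-boundedness under the hypothesis $p\in\mathcal{P}_{\gamma_d}^\infty(\mathbb{R}^d)$ is the content of the preceding sections of the paper. The main obstacle is precisely this step: each factor $\delta_i^* = -\tfrac{1}{\sqrt 2}\,e^{|x|^2}\partial_{x_i}\bigl(e^{-|x|^2}\,\cdot\,\bigr)$ produces an unbounded weight $e^{|x|^2}$, and one must verify that the cancellation built into the Mehler kernel, aided by the extra $e^{-t}$ factor coming from $\overline{T}^t$, tames this weight on the non-admissible region. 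This will require a delicate case analysis on the variable $t$, separating small $t$ (near the diagonal singularity) from large $t$ (governing the global decay), together with a careful change of variables in the Mehler exponent, in the same spirit as---but technically more involved than---the corresponding argument for $R_\alpha$ in \cite{DS}.
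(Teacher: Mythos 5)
Your local/global decomposition along the admissibility region $|x-y|\le C\min\{1,1/|x|\}$ is the same starting point as the paper's, and your treatment of the local part is on the right track: the paper shows the local piece is pointwise dominated by a genuine convolution singular integral with homogeneous kernel plus the Hardy--Littlewood maximal function, both localized to a suitable countable family of balls, and then invokes the $L^{p(\cdot)}(\mathbb{R}^d)$-boundedness of these operators under the hypothesis $p\in LH$ (note you should say $LH$, not just $LH_0$; the missing $LH_\infty$ comes for free because $\mathcal{P}_{\gamma_d}^\infty\subset LH_\infty$). Your remark about using Meyer's multiplier theorem to supply an initial $L^{p_0}(\gamma_d)$ estimate is a red herring: the paper does not extrapolate from a fixed-exponent estimate, but works directly on the variable exponent spaces.

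The genuine gap is your plan for the global part. You propose to prove the pointwise estimate $|R_\alpha^{*,\mathrm{glob}} f(x)|\lesssim\mathcal{M}_{\gamma_d}|f|(x)$ and then invoke the boundedness of $\mathcal{M}_{\gamma_d}$. This fails for two reasons. First, no such pointwise domination is available: the global kernel of $\overline{R}_F$ involves the weight $e^{|x|^2-|y|^2}$ and, after factoring out Gaussian densities, carries the exponential factor $e^{|x|^2/p(x)}$ in front of an integral operator with kernel $P(x,y)=|x+y|^d e^{-\alpha_\infty|x-y||x+y|}$; this is not an average over balls and is not controlled by the non-centered Gaussian maximal function. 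Indeed, for the classical (``old'') higher-order Gaussian Riesz transforms the global part is already not weak-type $(1,1)$, which would be incompatible with a pointwise bound by $\mathcal{M}_{\gamma_d}$. Second, the paper's Section 5 on $\mathcal{M}_{\gamma_d}$ is a parallel development (coming \emph{after} the proof of the main theorem), not an ingredient of it. What the paper actually does is prove the estimate
\[
\boldsymbol{G}f(x)\lesssim e^{\epsilon|x|^2}\Bigl(\int_{\mathbb{R}^d}|f|^{p^-}\,d\gamma_d\Bigr)^{1/p^-}+e^{|x|^2/p(x)}\int_{B(x)^c}P(x,y)\,|f(y)|\,e^{-|y|^2/p(y)}\,dy,
\]
with $\alpha_\infty>0$ thanks to $\mathcal{P}_{\gamma_d}^\infty$, and then bounds the resulting modular directly, using Lemma \ref{lem: changep} to replace $p(\cdot)$ by $p^-$ or $p_\infty$, and the uniform finiteness of $\sup_x\int_{B(x)^c}P(x,y)|f(y)|e^{-|y|^2/p(y)}\,dy$. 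You would need to replace your maximal-function step by this kind of direct kernel analysis, mirroring the estimates from \cite{DS} for the old Riesz transforms but now contending with the extra factor $e^{|x|^2-|y|^2}$, which is the genuinely new technical obstacle.
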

For the definitions and notations involved in the theorem above, see \S \ref{sec: prelim}.

In order to get a proof of Theorem \ref{thm: main}, in \S \ref{sec: new-variable} we will introduce a more general operator which contains these Riesz transforms and show its boundedness, following closely the ideas of our previous article \cite{DS}. Indeed, Theorem \ref{thm: main} will be a particular case of Theorem \ref{thm: local+global}.

On the other hand, in \S \ref{sec: maximal-variable}, we consider the variable $L^{p(\cdot)}$ boundedness of the non-centered Gaussian Hardy-Littlewood maximal function $\mathcal{M}_{\gamma_d}.$ Moreover, we prove a more general theorem dealing with such a boundedness for a maximal operator associated to a measure $\mu$ defined on a metric space, finding a geometric condition on the measure $\mu$ over the balls similar to the L. Diening's condition \eqref{eq: Diening-geom} for the case of Lebesgue measure. We prove that the very same properties on the exponents required in Theorem \ref{thm: main} are also sufficient for its boundedness on $L^{p(\cdot)}(\gamma_d)$. An equivalent condition to $\mathcal{P}_{\gamma_d}^\infty(\mathbb{R}^d)$ is also established.

\section{Preliminaries}\label{sec: prelim}

We now give some definitions about variable Lebesgue spaces on a measure space.

Given a $\sigma$-finite measure $\mu$ over $\mathbb R^d$, we shall denote with $\mathcal{P}(\mathbb R^d,\mu)$ the set of \textit{exponents}, that is, the set of $\mu$-measurable and bounded functions $p:\mathbb{R}^d\rightarrow [1,\infty)$. When $\mu$ is the Lebesgue measure, we simply write $\mathcal P(\mathbb{R}^d)$. For a $\mu$-measurable set $E$, we will write
\[p^-_E=\underset{x\in E}{\operatorname*{ess\ inf}}\ p(x), \quad p^+_E=\underset{x\in E}{\operatorname*{ess\ sup}}\ p(x),\]
and, for the whole space, we denote $p^-=p^-_{\mathbb{R}^d}$ and $p^+=p^+_{\mathbb{R}^d}$. 

Given $p\in \mathcal{P}(\mathbb{R}^d,\mu)$, we say that a $\mu$-measurable function $f$ belongs to $L^{p(\cdot)}(\mathbb{R}^d,\mu)$ if, for some $\lambda >0$,
\[\int_{\mathbb{R}^d} \left(\frac{|f(x)|}{\lambda}\right)^{p(x)}d\mu(x) <\infty.\]
The natural norm for these spaces is the Luxemburg norm, defined by
\[||f||_{p(\cdot),\mu}=\inf\bigg\{\lambda>0: \int_{\mathbb{R}^d} \left(\frac{|f(x)|}{\lambda}\right)^{p(x)}d\mu(x)\le 1 \bigg\},\]
which recovers the classical norm $||f||_{p,\mu}=\left(\int_{\mathbb{R}^d} |f(x)|^p d\mu(x)\right)^{1/p}$ when $p(x)\equiv p$, $1\le p<\infty$. It is also well-known that $\left(L^{p(\cdot)}(\mathbb{R}^d,\mu),||\cdot||_{p(\cdot),\mu}\right)$ is a Banach function space (\cite[Theorem 3.2.7]{DHHR}). When $\mu$ is the classical Lebesgue measure, we simply write $L^{p(\cdot)}(\mathbb{R}^d)$ and the norm as $||\cdot||_{p(\cdot)}$. 

Associated with each exponent $p\in \mathcal P(\mathbb{R}^d, \mu)$, we have another exponent $p'\in \mathcal P(\mathbb{R}^d, \mu)$, which is the generalization to the variable context of H\"older's conjugate exponent given by
\[\frac{1}{p(x)}+\frac{1}{p'(x)}=1, \quad\forall \, x\in \mathbb{R}^d.\] 
As expected, a generalization of H\"older's inequality  holds for variable exponents (\cite[Lemma 3.2.20]{DHHR}). Given a measure $\mu$ as above, for every pair of functions $f\in L^{p(\cdot)}(\mathbb{R}^d,\mu)$ and $g\in L^{p'(\cdot)}(\mathbb{R}^d,\mu)$,
\begin{equation*}\int_{\mathbb{R}^d} |f(x)g(x)|d\mu(x)\le 2\|f\|_{p(\cdot),\mu}\|g\|_{p'(\cdot),\mu}.
\end{equation*}

Another important property is the norm conjugate formula: for any $\mu$-measurable function $f$, the following inequalities
\begin{equation*}
\frac{1}{2}\|f\|_{p(\cdot),\mu}\le \sup\limits_{\|g\|_{p'(\cdot),\mu}\le 1} \int_{\mathbb{R}^d}|f(x)g(x)|d\mu(x)\le 2 \|f\|_{p(\cdot),\mu}.
\end{equation*}
hold (\cite[Corollary~3.2.14]{DHHR}). For more information about $L^{p(\cdot)}$ spaces, see, for instance, \cite{CUF} or \cite{DHHR}.

The measure we shall be dealing with is the non-standard Gaussian measure $\gamma_d$, which is a finite, non-doubling and upper Ahlfors $d$-regular measure on $\mathbb R^d$. From now on, $\mu=\gamma_d$.

The exponents we will consider are not arbitrary, but we may allow them to have some continuity properties. The following conditions on the exponent arise related with the boundedness of the Hardy-Littlewood maximal function $M_{\text{H-L}}$ on $L^{p(\cdot)}(\mathbb{R}^d)$ (see, for example, \cite{CUFN} or \cite{D1}).

\begin{defn}
	Let $p\in \mathcal{P}(\mathbb R^d)$. 
	\begin{enumerate}
		\item  We will say that $p\in LH_0(\mathbb{R}^d)$ if there exists $C_{\log}(p)>0$ such that, for any pair $x,y\in \mathbb{R}^d$ with $0<|x-y|<1/2$,
		\begin{equation}\label{def: LH0}
			|p(x)-p(y)|\le \frac{C_{\log}(p)}{-\log(|x-y|)}.
		\end{equation}
		
		\item We will say that $p\in LH_\infty(\mathbb{R}^d)$ if there exist constants $C_\infty>0$ and $p_\infty\ge 1$ for which
		\begin{equation}\label{def: LHinfty}
			|p(x)-p_\infty|\le \frac{C_\infty}{\log(e+|x|)}, \quad \forall \, x\in \mathbb{R}^d.
		\end{equation}
	\end{enumerate}
We will say $p\in LH(\mathbb R^d)$ when $p\in LH_0(\mathbb R^d)\cap LH_\infty(\mathbb R^d)$.
\end{defn}

Conditions \eqref{def: LH0} and \eqref{def: LHinfty} are usually called the \textit{local log-H\"{o}lder condition} and the \textit{decay log-H\"{o}lder condition}, respectively. When $p\in LH(\mathbb R^d)$, we simply say that it is \textit{log-H\"older continuous}. It is well-known that whenever $1<p^-\le p^+<\infty$, $p\in LH(\mathbb{R}^d)$ is a sufficient condition for the continuity of the Hardy-Littlewood maximal operator ${M}_{\text{H-L}}$ on variable Lebesgue spaces (see, for instance, \cite{CUFN}). However, it is not a necessary condition although it was proved in \cite[Examples 4.1 and 4.43]{CUF} that both $LH_0(\mathbb R^d)$ and $LH_\infty(\mathbb R^d)$ are the sharpest possible pointwise conditions on $p$. The authors in \cite{DHHR} gave a necessary and sufficient condition for the $L^{p(\cdot)}$-boundedness of ${M}_{\text{H-L}}$, but it is not easy to work with from the practical point of view. In this article, we do not expect to characterize the exponents, but to give sufficient easy-to-check conditions for them in order to obtain the boundedness properties for the operators in study.

Regarding the local log-H\"older condition \eqref{def: LH0}, L. Diening gave a geometric characterization of it (see \cite{D1}) in order to prove the boundedness of $M_{\text{H-L}}$ on bounded subsets of $\mathbb R^d$ or in the whole Euclidean space assuming $p$ is constant outside of a fixed ball.

\begin{lem}[\cite{D1}] Given $p\in \mathcal P(\mathbb R^d)$, $p\in LH_0(\mathbb R^d)$ if and only if there exists a positive constant $C$ such that
	\begin{equation}\label{eq: Diening-geom}
	|B|^{p^+_B-p^-_B}\ge C,
	\end{equation}
for every ball $B$.
\end{lem}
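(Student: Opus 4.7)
The statement is an equivalence, so I would prove both directions. The underlying principle is that $|B|^{p^+_B-p^-_B}\ge C$ becomes, upon taking logarithms, the bound $(p^+_B-p^-_B)\log(1/|B|)\le \log(1/C)$; and since $|B|\asymp r^d$ for a ball of radius $r$, this matches the scaling of the LH0 bound with $|x-y|\asymp r$ because $\log(1/|B|)\sim d\log(1/r)$.

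For the forward direction, I would fix a ball $B=B(c,r)$ with $r<1/4$. For any two points $x,y\in B$ with $x\ne y$ one has $|x-y|<2r<1/2$, so the LH0 condition combined with the monotonicity of $-\log$ gives $|p(x)-p(y)|\le C_{\log}(p)/(-\log(2r))$; taking essential suprema and infima then yields $p^+_B-p^-_B\le C_{\log}(p)/(-\log(2r))$. Multiplying by $-\log|B|=-\log\omega_d-d\log r$ and observing that $(-\log|B|)/(-\log(2r))$ is bounded on $(0,1/4)$ (it tends to $d$ as $r\to 0^+$ and is continuous on the interval), I obtain $(p^+_B-p^-_B)\log(1/|B|)\le K$ for some constant $K$, i.e.\ $|B|^{p^+_B-p^-_B}\ge e^{-K}$. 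For balls of radius $r\ge 1/4$, the exponent is bounded by $p^+-p^-<\infty$ and $|B|$ is bounded below by a positive constant, so either $|B|\ge 1$ (in which case the expression is $\ge 1$) or $|B|^{p^+_B-p^-_B}\ge |B|^{p^+-p^-}$, still bounded below by a positive constant.

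For the converse, I would fix $x,y$ with $0<|x-y|<1/2$ and choose a ball of radius proportional to $|x-y|$ containing both in its interior, for instance $B=B(x,2|x-y|)$, so $|B|=2^d\omega_d|x-y|^d$. For a.e.\ such pair of points one has $|p(x)-p(y)|\le p^+_B-p^-_B$; after modifying $p$ on a set of measure zero this inequality holds pointwise, which is enough to verify LH0. When $|x-y|$ is small enough that $|B|<1$, the geometric hypothesis gives $p^+_B-p^-_B\le \log(1/C)/(-\log|B|)$, and the identity $-\log|B|=d(-\log|x-y|)-\log(2^d\omega_d)$ transforms this into the desired $C'/(-\log|x-y|)$ bound for small separations. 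For the remaining bounded range of $|x-y|$, $-\log|x-y|$ is bounded above and below by positive constants, and the LH0 inequality follows from the trivial bound $|p(x)-p(y)|\le p^+-p^-<\infty$ after enlarging the constant.

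The main technical obstacles are, first, the bookkeeping between the regimes $|B|<1$ and $|B|\ge 1$, since the geometric condition is vacuous in the second and one must argue separately using the boundedness of $p$; and second, the transition between essential sup/inf and pointwise values of $p$ in the converse direction, which is handled by working modulo null sets and is mostly cosmetic given that $p\in\mathcal P(\mathbb R^d)$ is only assumed measurable.
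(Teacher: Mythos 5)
The paper does not prove this lemma; it simply cites Diening's article \cite{D1} for the equivalence. So there is no in-paper argument to compare against, and your proof has to stand on its own.

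Both directions of your argument are essentially correct and follow the standard route of taking logarithms and matching $\log(1/|B|)$ against $\log(1/|x-y|)$ via the relation $|B|\asymp r^d$. A few points are worth sharpening. In the forward direction you correctly reduce matters to the boundedness of $(-\log|B|)/(-\log 2r)$ on $(0,1/4)$; to justify this you should also observe that $\omega_d(1/4)^d<1$ for every $d\ge 1$, so $-\log|B|>0$ on that range and the quotient is genuinely well-defined and continuous up to the endpoint. In the converse direction, the split is right, but the phrase ``small enough that $|B|<1$'' is a little too coarse: the quotient $(-\log|x-y|)/(-\log|B|)$ blows up as $|B|\to 1^-$, so you must cut at, say, $|B|\le 1/2$ (equivalently $|x-y|$ below an explicit dimensional threshold) and absorb the intermediate range $\{\,|B|\in[1/2,1)\,\}\cup\{\,|B|\ge 1\,\}$ into the ``bounded range'' handled by $p^+-p^-<\infty$. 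Your parenthetical ``for small separations'' suggests you saw this, but the written split as stated would leave a gap near $|B|=1$.

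The most substantive technical remark concerns the passage from essential oscillations to a pointwise inequality in the converse. What your argument directly yields is $p^+_B-p^-_B\lesssim 1/(-\log\operatorname{diam} B)$ for all small balls, which is a bound on the a.e.\ oscillation of $p$ on $B$; this does force $p$ to agree a.e.\ with a (uniformly) continuous function, and it is that representative that satisfies \eqref{def: LH0} at every pair of points. This is more than ``modifying $p$ on a null set'' in a purely cosmetic sense — one needs a Lebesgue-point-type argument to produce the good representative — but once that observation is made the rest of the converse is sound. Overall the proposal is a correct proof of the cited equivalence, and since the constants you track are explicit it is in line with how \cite{D1} establishes the result.
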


An analogous property can also be obtained when dealing with the boundedness of $\mathcal{M}_\mu$, the non-centered maximal function associated with the measure $\mu$. We will consider it on \S \ref{sec: maximal-variable}.

Whilst Diening's geometric condition can be applied to control the behavior of $f$ when it is large, condition \eqref{def: LHinfty} happens to be useful when $f$ is small. This is evidenced in the following lemma, which establishes that we can change a variable exponent $p$ for its limit $p_\infty$, and viceversa, adding an integrable error (for a proof, see for instance \cite[Lemma~3.26]{CUF}). Previous results of this kind were given in \cite{CUFN, CUFNerrata}, closely related with the boundedness of the Hardy-Littlewood maximal operator in the Euclidean setting.

\begin{lem}[\cite{CUF}]\label{lem: changep}Let $p\in LH_\infty(\mathbb{R}^d)$ with $1<p^-\le p^+<\infty$. Then, there exists a constant C, depending on $d$ and $C_\infty$, such that for any set $E$ and any function $G$ with $0\le G(y)\le 1$ for $y\in E$,
	\begin{align*}
	\int_E G(y)^{p(y)} dy &\le C \int_E G(y)^{p_\infty} dy+\int_E (e+|y|)^{-dp^-} dy,\\
	\int_E G(y)^{p_\infty} dy &\le C \int_E G(y)^{p(y)} dy+\int_E (e+|y|)^{-dp^-} dy.
	\end{align*}
\end{lem}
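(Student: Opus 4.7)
The plan is to split the domain $E$ according to the size of $G(y)$ relative to the weight $(e+|y|)^{-d}$ and to treat each region by a different argument. Concretely, set
\[E_1=\{y\in E : G(y)\ge (e+|y|)^{-d}\}, \qquad E_2=E\setminus E_1.\]
The error-term region $E_2$ is the easy one: since $G(y)\le 1$ and $p^-\le p(y)$ for a.e.\ $y$, one has $G(y)^{p(y)}\le G(y)^{p^-}$, and on $E_2$ this is at most $(e+|y|)^{-dp^-}$. The same bound holds with $p(y)$ replaced by $p_\infty$, since the decay log-H\"older condition forces $p^-\le p_\infty$ (take $|y|\to\infty$ in the a.e.\ inequality $p^-\le p(y)$, using $p(y)\to p_\infty$). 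Integrating over $E_2$ produces the error integral on the right-hand side of both inequalities.

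On $E_1$ the point is to compare $G(y)^{p(y)}$ and $G(y)^{p_\infty}$ pointwise via
\[G(y)^{p(y)}=G(y)^{p_\infty}\cdot G(y)^{p(y)-p_\infty}.\]
Because $(e+|y|)^{-d}\le G(y)\le 1$, the logarithm satisfies $|\log G(y)|\le d\log(e+|y|)$. Combining this with the decay log-H\"older estimate $|p(y)-p_\infty|\le C_\infty/\log(e+|y|)$ gives the uniform bound
\[\bigl|(p(y)-p_\infty)\,\log G(y)\bigr|\le d\,C_\infty,\]
so that $e^{-dC_\infty}\le G(y)^{p(y)-p_\infty}\le e^{dC_\infty}$ throughout $E_1$. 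Integrating this two-sided comparison over $E_1$ and adding the contribution of $E_2$ yields both asserted inequalities with $C=e^{dC_\infty}$.

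I do not anticipate a serious obstacle: the estimate is symmetric in $p(y)$ and $p_\infty$, so a single chain of inequalities delivers both directions simultaneously. The one delicate point is the calibration of the threshold $(e+|y|)^{-d}$; it must be chosen so that (i) on $E_2$ the crude bound $G(y)^{p^-}\le (e+|y|)^{-dp^-}$ matches the exponent $dp^-$ in the statement, and (ii) on $E_1$ the lower bound on $G$ combines with the $1/\log(e+|y|)$ decay of $|p(y)-p_\infty|$ to cancel exactly, producing the dimensional constant $dC_\infty$ in the exponential.
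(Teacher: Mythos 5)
Your proof is correct, and it is essentially the standard argument from the cited source [CUF, Lemma~3.26]: split the domain at the threshold $G(y)=(e+|y|)^{-d}$, use the trivial bound $G^{p(\cdot)},G^{p_\infty}\le G^{p^-}$ on the small-$G$ region (after verifying $p_\infty\ge p^-$), and on the large-$G$ region combine $|\log G(y)|\le d\log(e+|y|)$ with the $LH_\infty$ decay to make $G(y)^{p(y)-p_\infty}$ uniformly bounded above and below by $e^{\pm dC_\infty}$. The paper does not give its own proof of this lemma (it simply cites the reference), and your calibration of the threshold and identification of $C=e^{dC_\infty}$ match the cited proof.
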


We will now recall the class of exponents introduced in our previous article \cite{DS}, which is related with the boundedness of the ``old'' Riesz transforms.
\begin{defn}
	 Given $p\in \mathcal P(\mathbb{R}^d, \gamma_d)$, we will say that $p\in \mathcal P_{\gamma_d}^\infty(\mathbb{R}^d)$ if there exist constants $C_{\gamma_d}>0$ and $p_\infty\ge 1$ such that
	\begin{equation*}
		|p(x)-p_\infty|\le \frac{C_{\gamma_d}}{|x|^2}, \quad \forall \,  x\in \mathbb{R}^d\setminus \{(0,\dots,0)\}.
	\end{equation*}
\end{defn}

As observed in \cite[Remark 2.4]{DS}, if $p\in \mathcal{P}_{\gamma_d}^\infty(\mathbb{R}^d)$, then $p\in LH_\infty(\mathbb{R}^d)$, and, if $p^->1$, also $p'\in \mathcal{P}_{\gamma_d}^\infty(\mathbb{R}^d)$ with $(p')_\infty=(p_\infty)':=p'_\infty<\infty$. Since, in this case,  $p_\infty=\underset{|x|\rightarrow \infty}{\lim}p(x)$, we have $p_\infty>1$ whenever $p^->1$.

 From now on, we will use this notation: given two functions $f$ and $g$, by $\lesssim$ and $\gtrsim$ we will mean that there exists a positive constant $c$ such that $f\le c g$ and $cf\ge g$, respectively. When both inequalities hold, i.e., $f\lesssim g \lesssim f$, we will write it as $f\thickapprox g$.
 
 \bigskip
 
 As it is usual in the Gaussian context, we consider the ``local'' and ``global'' parts of several operators, in order to analyze their properties. For this partition, we may recall the definition of the hyperbolic ball
 \[B(x):=\left\{y\in \mathbb R^d: |y-x|\le d(1\wedge 1/|x|)\right\},\quad x\in \mathbb R^d,\]
 where $\alpha\wedge \beta=\min\{\alpha,\beta\}, \alpha,\beta\in \mathbb{R}$. Given a sublinear operator $S$, we say that $S\left(f\chi_{B(\cdot)}\right)$ is the \textit{local part} and $S\left(f\chi_{B^c(\cdot)}\right)$, being $B^c(x):=\mathbb{R}^d\setminus B(x)$, is the \textit{global part}.

\section{The ``new'' higher order Gaussian Riesz Transforms on variable Lebesgue spaces}\label{sec: new-variable}

We have that the ``old'' higher order Gaussian Riesz transforms $R_\alpha f$ can be written as an integral operator with a kernel $K_\alpha$
\begin{equation*}
R_\alpha f(x)=\text{p.v.} \int_{\mathbb{R}^d}  K_\alpha(x,y)f(y)dy,
\end{equation*}
with \[ K_\alpha(x,y)=C_\alpha\int_0^1 r^{|\alpha|-1}\left(\frac{-\log r}{1-r^2}\right)^{\frac{|\alpha|-2}{2}} H_\alpha \left(\frac{y-rx}{\sqrt{1-r^2}}\right) \frac{e^{-\frac{|y-rx|^2}{1-r^2}}}{(1-r^2)^{\frac{d}{2}+1}} dr.\]
On the other hand, the ``new'' higher order Gaussian Riesz transforms are given by
\[R_\alpha^* f(x)=\text{p.v.} \int_{\mathbb{R}^d}  \overline{ K}_\alpha(x,y)f(y)dy,\]
where 
\[ \overline{ K}_\alpha(x,y)=C_\alpha\int_0^1 \left(\frac{-\log r}{1-r^2}\right)^{\frac{|\alpha|-2}{2}} H_\alpha \left(\frac{x-ry}{\sqrt{1-r^2}}\right) \frac{e^{-\frac{|x-ry|^2}{1-r^2}}}{(1-r^2)^{\frac{d}{2}+1}} dr\ e^{|x|^2-|y|^2}\]
and $\alpha$ is a multi-index in $\mathbb N_0^d\setminus\{(0,\dots,0)\}$.

As we said in the introduction, in the spirit of \cite{DS} and \cite{sonsoles}, we will introduce a larger class of singular integrals, containing $R_\alpha^*$, and prove their boundedness on $L^{p(\cdot)}(\gamma_d)$.

Let $F\in C^1(\mathbb{R}^d)$ be a function which is orthogonal with respect to the Gaussian measure, i.e. \[\int_{\mathbb{R}^d} F(x) d\gamma_d(x)=0,\] and for every $\epsilon>0,$ there exists some constant $C_\epsilon >0,$ such that $\forall x\in \mathbb{R}^d$
\begin{enumerate}[label=(\roman*)]
	\item $|F(x)|\le C_\epsilon \ e^{\epsilon |x|^2},$
	\item $|\nabla F(x)|\le C_\epsilon \ e^{\epsilon |x|^2}$.
\end{enumerate}

We define the singular integral operator 
\begin{equation*}
\overline{R}_F f(x)=\text{p.v.}\int_{\mathbb{R}^d}\overline{ K}_F(x,y) f(y) dy,
\end{equation*}
with kernel \[\overline{ K}_F(x,y)=\int_0^1 \left(\frac{-\log r}{1-r^2}\right)^{\frac{m-2}{2}} F \left(\frac{x-ry}{\sqrt{1-r^2}}\right) \frac{e^{-\frac{|x-ry|^2}{1-r^2}}}{(1-r^2)^{\frac{d}{2}+1}} dr\ e^{|x|^2-|y|^2}.\]
If we make the change of variables $t=1-r^2$ in the integral defining the above kernel we obtain
\[\overline{ K}_F(x,y)=\int_0^1 \psi_m (t)\ F\left(\frac{x-\sqrt{1-t}y}{\sqrt{t}}\right)\frac{e^{-\frac{|x-\sqrt{1-t}y|^2}{t}}}{t^{\frac{d}{2}+1}}\frac{dt}{\sqrt{1-t}}\ e^{|x|^2-|y|^2},\]
with $\psi_m(t)=\left(\frac{\log \frac{1}{\sqrt{1-t}}}{t}\right)^{\frac{m-2}{2}}=\left(\frac{-\log (1-t)}{t}\right)^{\frac{m-2}{2}}2^{-\frac{m-2}{2}}.$

If we set $F(x)=C_\alpha H_\alpha (x),$ then $\overline{R}_F=R_\alpha^*,$ with $m=|\alpha|=\alpha_1+\cdots+\alpha_d.$

The singular integral will be splitting into the local and global parts as follows
\begin{align*}
\overline{R}_F f(x)&=\overline{R}_F (f\chi_{B(\cdot)})(x)+\overline{R}_F (f\chi_{B^c(\cdot)})(x)\\ 
& =:\boldsymbol{L}f(x) +\boldsymbol{G} f(x).
\end{align*}

\subsection{The local part}

In this section, we shall prove that
\begin{lem}For every $x\in \mathbb R^d$, 
	\begin{equation}\label{eq: localpart}
	\boldsymbol{L}f(x)\lesssim \sum_{B\in \mathcal{F}}\left(|\overline{T}_F(f\chi_{\hat{B}})(x)|+ M_{\text{H-L}}(f\chi_{\hat{B}})(x)\right)\chi_B(x),
	\end{equation}
being $\overline{T}_F$ a singular integral operator, $M_{\text{H-L}}$ the non-centered Hardy-Littlewood maximal function, and $\mathcal F=\{B\}$ and $\hat{\mathcal{F}}=\{\hat{B}\}$ are the families of balls given by \cite[Lemma 3.1]{DS}.
\end{lem}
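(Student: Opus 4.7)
The plan is to follow the blueprint from \cite{DS}. First I would apply the covering lemma \cite[Lemma 3.1]{DS} to obtain families $\mathcal F=\{B\}$ and $\hat{\mathcal F}=\{\hat B\}$ of Euclidean balls such that $\mathbb R^d=\bigcup_{B\in \mathcal F}B$ with bounded overlap, $B\subset \hat B$, $|\hat B|\thickapprox|B|\thickapprox(1\wedge |x|^{-1})^d$ for $x\in B$ and, crucially, whenever $x\in B$ the hyperbolic ball $B(x)$ lies in $\hat B$. This turns the identity
\[
\boldsymbol L f(x)\ =\ \int_{B(x)} \overline K_F(x,y)\,f(y)\,dy \qquad (x\in B)
\]
into one involving only values of $f$ on $\hat B$.

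The second step is to introduce an auxiliary ``Euclidean'' operator $\overline T_F$ whose kernel is a freezing of $\overline K_F$ obtained by replacing the twisted variable $\tfrac{x-\sqrt{1-t}y}{\sqrt t}$ by $\tfrac{x-y}{\sqrt t}$ and by dropping the Gaussian weight $e^{|x|^2-|y|^2}$:
\[
\widetilde K_F(x,y)\ =\ \int_0^1 \psi_m(t)\,F\!\left(\tfrac{x-y}{\sqrt t}\right)\,\frac{e^{-|x-y|^2/t}}{t^{d/2+1}}\,dt.
\]
Thanks to the orthogonality of $F$ and the polynomial-like growth conditions (i)--(ii), $\overline T_F$ is a standard Calder\'on--Zygmund operator. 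The heart of the argument is then the pointwise comparison
\[
\bigl|\overline K_F(x,y)-\widetilde K_F(x,y)\bigr|\,\chi_{B(x)}(y)\ \lesssim\ \frac{1}{|\hat B|},\qquad x\in B,
\]
which, integrated against $|f|\chi_{\hat B}$, produces a contribution dominated by $M_{\text{H-L}}(f\chi_{\hat B})(x)$. To prove this I would use the splitting
\[
\frac{x-\sqrt{1-t}\,y}{\sqrt t}\ =\ \frac{x-y}{\sqrt t}+\frac{1-\sqrt{1-t}}{\sqrt t}\,y,\qquad \frac{1-\sqrt{1-t}}{\sqrt t}\thickapprox \sqrt t,
\]
a Taylor expansion of $F$ (controlled by (ii)) and a careful comparison of the Gaussian exponent $|x|^2-|y|^2-|x-\sqrt{1-t}\,y|^2/t$ with $-|x-y|^2/t$. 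In the local regime $|y|\thickapprox|x|$ and $|x-y|\lesssim 1\wedge |x|^{-1}$, both adjustments introduce extra factors of $\sqrt t$ (or of $t|y|^2$) which compensate the singular weight $\psi_m(t)\,t^{-d/2-1}$ and make the $t$-integral finite, of size $O((1\vee |x|)^d)\thickapprox |\hat B|^{-1}$.

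The main term $\int_{B(x)} \widetilde K_F(x,y)\,f(y)\,dy$ is then rewritten as $\overline T_F(f\chi_{\hat B})(x)-\int_{\hat B\setminus B(x)} \widetilde K_F(x,y)\,f(y)\,dy$, and the off-diagonal piece is dominated by $M_{\text{H-L}}(f\chi_{\hat B})(x)$ via the classical size bound $|\widetilde K_F(x,y)|\lesssim |x-y|^{-d}$ combined with $|\hat B|\thickapprox|B(x)|$ for $x\in B$. Multiplying by $\chi_B(x)$ and summing over $B\in \mathcal F$ (with its bounded overlap) then yields \eqref{eq: localpart}. The main obstacle I anticipate is the uniform control of the kernel comparison as $t\to 0^+$, where the Gaussian cancellation and the polynomial growth (i)--(ii) of $F$ must be balanced jointly against the singular weight $\psi_m(t)\,t^{-d/2-1}$; the hyperbolic-ball restriction $|x-y|\lesssim 1\wedge|x|^{-1}$ enters precisely to make these compensations quantitative.
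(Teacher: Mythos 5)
Your architecture (covering lemma, auxiliary Calder\'on--Zygmund operator, error $\lesssim$ maximal function, off-diagonal piece $\lesssim$ maximal function) matches the paper's, but there are two substantive gaps.

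First, your auxiliary kernel
\[
\widetilde K_F(x,y)=\int_0^1\psi_m(t)\,F\!\left(\tfrac{x-y}{\sqrt t}\right)\frac{e^{-|x-y|^2/t}}{t^{d/2+1}}\,dt
\]
is not homogeneous: the factor $\psi_m(t)$ genuinely depends on $t$, and the $t$-integral is truncated at $1$. The paper takes two further reduction steps precisely to remove these obstructions. It replaces $\psi_m(t)$ (equivalently $\phi_m(t)$) by its constant value $\phi_m(0)$, controlling the error via $|\phi_m(t)-\phi_m(0)|\lesssim t/(1-t)$, and it extends the $t$-integral from $[0,1]$ to $[0,\infty)$, with the tail contributing a term bounded by $\sup_{t>0}\omega_{\sqrt t}(x-y)$. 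Only after both reductions does one obtain the kernel $\overline{\mathcal K}_F(z)=\Omega(z')/|z|^d$, homogeneous of degree $-d$, with $\Omega$ of zero mean over $S^{d-1}$ thanks to $\int F\,d\gamma_d=0$. That homogeneity is not cosmetic: in the proof of Theorem~\ref{thm: local+global} the boundedness of the local part relies on Theorem~\ref{thm: MHL-T}, which is stated for singular integrals with \emph{homogeneous} kernels of the form~\eqref{eq: T}. Your $\overline T_F$ does not fall within its scope, so even if you could verify standard CZ size and regularity estimates, you would have no black-box $L^{p(\cdot)}(\mathbb R^d)$ theorem to cite downstream.

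Second, your claimed pointwise comparison
\[
\bigl|\overline K_F(x,y)-\widetilde K_F(x,y)\bigr|\chi_{B(x)}(y)\ \lesssim\ \frac{1}{|\hat B|},\qquad x\in B,
\]
is too strong. The error produced by replacing $\sqrt{1-t}\,y$ by $y$ in the argument of $F$ and in the Gaussian exponent has a near-diagonal singularity, and the correct estimate (following P\'erez, cited in the paper) is
\[
\bigl|\widetilde K(x,y)-\overline{\mathcal K}_F(x-y)\bigr|\ \lesssim\ \bigl(1+|x|^{1/2}\bigr)\,\frac{1}{|x-y|^{d-1/2}},\qquad y\in B(x),
\]
which blows up as $y\to x$. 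This is integrable and hence still dominated by $M_{\text{H-L}}(f\chi_{\hat B})(x)$ after integrating against $|f|\chi_{\hat B}$, so the conclusion survives, but your uniform bound $|\hat B|^{-1}$ does not hold and the Taylor-expansion heuristic as phrased would not deliver it. You should replace it by the weak-singularity estimate above (and by the $\sup_{t>0}\omega_{\sqrt t}(x-y)$ bound for the remaining error terms) to obtain the required pointwise control.
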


We are going to look at the kernel written as
\begin{align*}\overline{ K}_F(x,y)&=\int_0^1 \frac{\psi_m (t)}{\sqrt{1-t}}\ F\left(\frac{x-\sqrt{1-t}y}{\sqrt{t}}\right)\frac{e^{-\frac{|y-\sqrt{1-t}x|^2}{t}}}{t^{\frac{d}{2}+1}}\ dt \\ &=
\int_0^1 \phi_m (t)  F\left(\frac{x-\sqrt{1-t}y}{\sqrt{t}}\right)\frac{e^{-\frac{|y-\sqrt{1-t}x|^2}{t}}}{t^{\frac{d}{2}+1}}\ dt
\end{align*}
where, as before, $\psi_m(t)=\left(\frac{-\log (1-t)}{t}\right)^{\frac{m-2}{2}}2^{-\frac{m-2}{2}},$ and $\phi_m(t)=\frac{\psi_m(t)}{\sqrt{1-t}}.$ Let us remark that $\lim_{t\to0^+}\phi_m(t)=2^{-\frac{m-2}{2}},$ so we can define $\phi_m(0)=2^{-\frac{m-2}{2}}.$ Besides \[|\phi_m(t)-\phi_m(0)|\le C \frac{t}{1-t}.\]
Then
\begin{align*}\overline{K}_F(x,y)&=\left(\int_0^{\frac12}+\int_{\frac12}^1\right) \phi_m (t)  F\left(\frac{x-\sqrt{1-t}y}{\sqrt{t}}\right)\frac{e^{-\frac{|y-\sqrt{1-t}x|^2}{t}}}{t^{\frac{d}{2}+1}}\ dt\\
&=:\overline{K}_F^1(x,y)+\overline{K}_F^2(x,y). \end{align*}
Let 
\begin{equation}\label{eq: u}
u(t):=\frac{|y-\sqrt{1-t}x|^2}{t},
\end{equation}
and
\begin{equation}\label{eq: ubar}
\overline{u}(t):=\frac{|x-\sqrt{1-t}y|^2}{t}.
\end{equation}
Then $\overline{u}(t)=u(t)+|x|^2-|y|^2.$  On $B(x),$ $u(t)\ge \frac{|x-y|^2}{t}-2d$ and $||x|^2-|y|^2|\le 4 d.$ Thus 
\begin{align}\label{eq: KF2}
|\overline{K}_F^2(x,y)|&\lesssim \int_{\frac12}^1\frac{(-\log (1-t))^{\frac{m-2}{2}}}{\sqrt{1-t}}e^{\epsilon \overline{u}(t) }\frac{e^{-u(t)}}{t^{\frac d2}} dt\nonumber\\
&=\int_{\frac12}^1\frac{(-\log (1-t))^{\frac{m-2}{2}}}{\sqrt{1-t}}e^{\epsilon (|x|^2-|y|^2) }\frac{e^{-(1-\epsilon)u(t)}}{t^{\frac d2}} dt\nonumber \\ &\lesssim \int_{0}^1\frac{(-\log (1-t))^{\frac{m-2}{2}}}{\sqrt{1-t}}\frac{e^{-(1-\epsilon)\frac{|x-y|^2}{t}}}{t^{\frac d2}} dt \nonumber\\
 &\lesssim \left(\int_0^\infty s^{\frac{m-2}{2}}e^{-\frac{s}{2}}ds\right) \sup_{t>0}\omega_{\sqrt{t}}(x-y),\end{align}
where we have applied the change of variables $s=-\log(1-t)$, and  considered $\omega_{t}(z):=t^{-d}e^{-(1-\epsilon)\frac{|z|^2}{t^2}}$.

 Now \begin{align*}
\overline{K}_F^1(x,y)&= \phi_m(0) \int_0^1 F\left(\frac{x-\sqrt{1-t}y}{\sqrt{t}}\right)\frac{e^{-u(t)}}{t^{\frac d2+1}}dt\\&\quad -\phi_m(0) \int_{\frac12}^1 F\left(\frac{x-\sqrt{1-t}y}{\sqrt{t}}\right)\frac{e^{-u(t)}}{t^{\frac d2+1}}dt\\ &\quad +
\int_0^{\frac12}(\phi_m(t)-\phi_m(0))F\left(\frac{x-\sqrt{1-t}y}{\sqrt{t}}\right)\frac{e^{-u(t)}}{t^{\frac d2+1}}dt\\ & =: \widetilde{K}(x,y)+\overline{K}_{F,1}^1(x,y)+\overline{K}^1_{F,2}(x,y).
\end{align*}
Let us observe that, for $j=1,2$, we can proceed as before to obtain
\[|\overline{K}^1_{F,j}(x,y)|\lesssim e^{\epsilon\left(|x|^2-|y|^2\right)}\int_0^1 \frac{e^{-(1-\epsilon)\frac{|x-y|^2}{t}}}{t^{\frac d2}}dt\lesssim \sup_{t>0}\omega_{\sqrt{t}}(x-y).\]

Following \cite{sonsoles}, if we define, for $x\ne 0,$ \[\overline{\mathcal{K}}_F(x)=\int_0^\infty F\left(\frac{x}{\sqrt{t}}\right)\frac{e^{-\frac{|x|^2}{t}}}{t^{\frac d2+1}}dt=\frac{\Omega(x')}{|x|^d},\] with $\Omega(x')=2\int_0^\infty F(sx')s^{d-1}e^{-s^2}ds$ and $x'=\frac{x}{|x|},$ we have
\[\int_{S^{d-1 }}\Omega(x')d\sigma(x')=2\pi^{\frac d2}\int_{\mathbb{R}^d}F(x)d\gamma_d(x)=0,\] i.e., $\overline{\mathcal{K}}_F(x)$ is a homogeneous kernel of degree $-d$, and therefore 
\begin{equation}\label{eq: T-CZ}
	\overline{T}_F(f)(x)=\text{p.v.} \ \overline{\mathcal{K}}_F\ast f(x)
\end{equation}
is a singular integral operator with homogeneous kernel, an example of a singular integral of Calder\'on-Zygmund type.

Now we write 
\[\overline{K}_F^1(x,y)=\overline{\mathcal{K}}_F(x-y)+ \overline{K}_{F,0}^1(x,y) +\overline{K}_{F,1}^1(x,y)+\overline{K}^1_{F,2}(x,y),\]
with $\overline{K}_{F,0}^1(x,y)=\widetilde{K}(x,y)-\overline{\mathcal{K}}_F(x-y).$ 

Let us recall that for every $x\in B\in \mathcal{F},$ $B(x)\subset \hat{B}$. Hence, looking at \cite[p. 506]{sonsoles} (see also \cite[(3.4) and (3.8)]{DS}) and taking into account that on $B(x),$ $|x|\thickapprox |y|$, we have
\begin{align*}
\int_{B(x)}\overline{K}_{F,0}^1(x,y) |f(y)|dy&\lesssim \left(1+|x|^{\frac12}\right) \int_{\mathbb{R}^d}\frac{1}{|x-y|^{d-1/2}}|f(y)|\chi_{\hat{B}}(y)dy \\ &\lesssim M_{\text{H-L}}(f \chi_{\hat{B}})(x),
\end{align*}
for $j=1,2$
\begin{equation*}
\int_{B(x)}\overline{K}_{F,j}^1(x,y) |f(y)|dy\lesssim \int_{\mathbb{R}^d} \sup_{t>0} \omega_{\sqrt{t}}(x-y)|f(y)|\chi_{\hat{B}}(y)dy\lesssim M_{\text{H-L}}(f \chi_{\hat{B}})(x),
\end{equation*}
and also, from \eqref{eq: KF2},
\begin{equation*}
\int_{B(x)}\overline{K}_{F}^2(x,y) |f(y)|dy\lesssim \int_{\mathbb{R}^d} \sup_{t>0} \omega_{\sqrt{t}}(x-y)|f(y)|\chi_{\hat{B}}(y)dy\lesssim M_{\text{H-L}}(f \chi_{\hat{B}})(x).
\end{equation*}
Finally, for $x\in B$,
\[
\text{p.v.} \int_{B(x)}\overline{\mathcal{K}}_F(x-y)f(y)dy=\overline{T}_F(f\chi_{\hat{B}})(x)- \int_{\hat{B}\setminus B(x)}\overline{\mathcal{K}}_F(x-y)f(y)dy,\]
and $\left|\int_{\hat{B}\setminus B(x)}\overline{\mathcal{K}}_F(x-y)f(y)dy\right|\lesssim M_{\text{H-L}}(f\chi_{\hat{B}})(x).$ 

This ends the estimates for the local part of the operator and \eqref{eq: localpart} yields.

\subsection{The global part}

The aim of this section is to show that
\begin{lem}For every $x\in \mathbb R^d$, and $0<\epsilon<\frac{1}{2p'_\infty}\wedge \frac1d$,
	\begin{equation}\label{eq: globalpart}
	\boldsymbol{G}f(x)\lesssim  e^{\epsilon |x|^2}\left(\int_{\mathbb{R}^d}|f(y)|^{p^{-}}\gamma_d(dy)\right)^{\frac{1}{p^{-}}} +e^{\frac{|x|^2}{p(x)}}\int_{B(x)^c}P(x,y)|f(y)|e^{-\frac{|y|^2}{p(y)}}dy 
	\end{equation}
	being \[P(x,y)=|x+y|^d e^{-\alpha_\infty |x-y||x+y|}\] where $\alpha_\infty =\frac{1-\epsilon}{2}-\left|\frac{1}{p_\infty}-\frac{1-3\epsilon}{2}\right|$. 
\end{lem}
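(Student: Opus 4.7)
The plan is to follow the approach of \cite{DS,sonsoles}: bound $\overline{K}_F$ pointwise by a Gaussian kernel using condition~(i), apply a Laplace/saddle-point analysis to the time integral, and then absorb the variable-exponent weights $e^{|x|^2/p(x)-|y|^2/p(y)}$ via the decay condition on $p$. Setting $z=(x-\sqrt{1-t}y)/\sqrt{t}$, one has $|z|^2=\bar u(t)$, and since $\bar u(t)-u(t)=|x|^2-|y|^2$, condition~(i) gives
\[|\overline{K}_F(x,y)|\lesssim e^{\epsilon(|x|^2-|y|^2)}\int_0^1\frac{(-\log(1-t))^{\frac{m-2}{2}}}{\sqrt{1-t}}\,\frac{e^{-(1-\epsilon)u(t)}}{t^{d/2+1}}\,dt.\]

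For the time integral I use Laplace's method. Substituting $s=\sqrt{1-t}$ rewrites $u$ as $|y-sx|^2/(1-s^2)$; a direct derivative computation combined with the identity $(|x|^2+|y|^2)^2-4\langle x,y\rangle^2=|x-y|^2|x+y|^2$ gives the saddle-point value
\[u_{\min}:=\min_{t\in[0,1)}u(t)=\frac{|y|^2-|x|^2+|x-y||x+y|}{2}\]
when $\langle x,y\rangle\ge 0$ (an endpoint analysis takes care of the opposite case). A Laplace expansion around the critical time $t^*=1-s^{*2}$, together with standard tail bounds near $t=0$ and $t=1$ using $|x-y|\gtrsim 1\wedge 1/|x|$ on $B(x)^c$, yields
\[\int_0^1\frac{(-\log(1-t))^{\frac{m-2}{2}}}{\sqrt{1-t}}\,\frac{e^{-(1-\epsilon)u(t)}}{t^{d/2+1}}\,dt\lesssim |x+y|^d\, e^{-(1-\epsilon)u_{\min}},\]
the polynomial factor coming from the $1/(t^*)^{d/2}$ at the saddle point. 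Substituting back gives
\[|\overline{K}_F(x,y)|\lesssim |x+y|^d\, e^{\frac{(1+\epsilon)(|x|^2-|y|^2)}{2}}\, e^{-\frac{(1-\epsilon)|x-y||x+y|}{2}}.\]

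To introduce the variable exponent, I multiply and divide by $e^{|x|^2/p(x)-|y|^2/p(y)}$; since $p\in\mathcal{P}_{\gamma_d}^\infty(\mathbb{R}^d)$ one has $|x|^2/p(x)-|y|^2/p(y)=(|x|^2-|y|^2)/p_\infty+O(1)$, so a $1/p_\infty$-share of the Gaussian term is transferred into the weight. Redistributing the residual $\epsilon$-corrections appropriately, the remaining exponent takes the form
\[\left(\frac{1-3\epsilon}{2}-\frac{1}{p_\infty}\right)(|x|^2-|y|^2)-\frac{1-\epsilon}{2}|x-y||x+y|+O(1),\]
and the elementary inequality $\bigl||x|^2-|y|^2\bigr|\le|x-y||x+y|$ then bounds this by $-\alpha_\infty|x-y||x+y|+O(1)$ with $\alpha_\infty$ as stated. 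The condition $\epsilon<1/(2p'_\infty)$ is precisely what is needed to ensure $\alpha_\infty>0$, and $\epsilon<1/d$ handles convergence in the polynomial prefactor. The first summand in \eqref{eq: globalpart} is reserved for a residual region where the sharp bound degenerates (e.g.\ when $t^*\notin[0,1)$): there a coarse estimate of the kernel combined with H\"older's inequality with exponent $p^-$ on the finite measure $\gamma_d$ produces the term $e^{\epsilon|x|^2}\bigl(\int_{\mathbb{R}^d}|f|^{p^-}\,d\gamma_d\bigr)^{1/p^-}$. The main obstacle is the Laplace-type estimate for the time integral and the careful bookkeeping of $\epsilon$-corrections needed to extract exactly $\alpha_\infty=\frac{1-\epsilon}{2}-\bigl|\frac{1}{p_\infty}-\frac{1-3\epsilon}{2}\bigr|$.
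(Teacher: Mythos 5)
Your proposal reconstructs the paper's argument: use condition~(i) on $F$ to absorb its growth into $e^{\epsilon\bar u(t)}$, analyze $u(t)$ via the substitution $t\mapsto s=\sqrt{1-t}$ (which gives exactly the paper's $t_0$ and $u_0$, distinguishing $b=2\langle x,y\rangle>0$ from $b\le 0$), transfer the variable-exponent weight via $p\in\mathcal{P}_{\gamma_d}^\infty$, and reserve the $p^-$-H\"older term for the degenerate endpoint case $b\le 0$ (where $t_0=1$); this is precisely the paper's route, organized around its Lemma~\ref{lem: boundsexp}. The one place where you compress rather than prove is the uniform bound on the time integral — the paper devotes a lemma with a case split $0<m\le 2$ versus $m>2$ and two explicit changes of variables (borrowed from \cite{sonsoles}) to handle the logarithmic factor $(-\log(1-t))^{(m-2)/2}$ that blows up as $t\to 1^-$, and it cites \cite{DS} for $t_0^{-d/2}\lesssim|x+y|^d$; calling this ``Laplace's method plus standard tail bounds'' captures the idea but not the computation. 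Likewise ``redistributing the residual $\epsilon$-corrections appropriately'' to arrive at $\tfrac{1-3\epsilon}{2}$ is asserted rather than carried out, though the resulting expression for $\alpha_\infty$ and the condition $\epsilon<\tfrac{1}{2p'_\infty}$ do match the paper.
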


Let us notice that the global part of these new Gaussian Riesz transforms is strictly larger than the global part of the old ones, but still we get the right estimates on this kernel such that the boundedness of this part also holds. 

In order to study the global part of the new higher order Gaussian Riesz transforms, we will follow the ideas of \cite{sonsoles}. To that end, we might recall some notation and results from that article (see also \cite{MPS2} or \cite{MPS1}).

For $x,y\in \mathbb R^d$, we set
\[a=a(x,y)=: |x|^2+|y|^2\quad \textrm{and}\quad b=b(x,y)=:2\langle x,y\rangle.\]

On the complement of $B(x)$, we know $a>d/2$ and $\sqrt{a^2-b^2}=|x+y||x-y|>d$ whenever $b>0$.

Recall the definitions of $u$ and $\overline{u}$ given in \eqref{eq: u} and \eqref{eq: ubar}. Hence, we can write $u(t)=\frac{a}{t}-\frac{\sqrt{1-t}}{t}b-|y|^2$. Both $u$ and $\overline{u}$ 
have a minimum and it is attained at $t_0$, given by
\begin{equation}\label{eq: t0}
t_0=\left\{\begin{tabular}{ll} $2\frac{\sqrt{a^2-b^2}}{a+\sqrt{a^2-b^2}}$& if $b>0$\\ $1$& if $b\le 0$.\end{tabular}\right.
\end{equation}
The minimum value is 
\begin{equation}\label{eq: u0}
u_0:=u(t_0)=\left\{\begin{tabular}{ll} $\frac{|y|^2-|x|^2+|x+y||x-y|}{2}$& if $b>0$\\ $|y|^2$& if $b\le 0$.\end{tabular}\right.
\end{equation}
Then, 
\[\frac{e^{-u(t)}}{t^{d/2}}\le C \frac{e^{-u_0}}{t_0^{d/2}},\] and 
\[\frac{e^{-\bar{u}(t)}}{t^{d/2}}\le C \frac{e^{-(u_0+|x|^2-|y|^2)}}{t_0^{d/2}}.\] 
Moreover, the following result holds.

\begin{lem}\label{lem: boundsexp}Let us consider the kernel $\overline{ K}_F(x,y)$ in the global part, that is, for $y\in B^c(x)$. We have the following inequalities
	\begin{enumerate}[label=(\roman*)]
		\item \label{it: b<=0}If $b\le 0$, for each $0<\epsilon <1$, there exists $C_\epsilon>0$ such that
		\[|\overline{ K}_F(x,y)|\le C_\epsilon e^{\epsilon|x|^2-|y|^2};\]
		\item \label{it: b>0}If $b> 0$, for each $0<\epsilon <1/d$ there exists $C_\epsilon>0$ such that
		\[|\overline{ K}_F(x,y)|\le C_\epsilon \frac{e^{-(1-\epsilon)u_0}}{t_0^{d/2}}e^{\epsilon\left(|x|^2-|y|^2\right)},\]
		where $t_0$ and $u_0$ are as in \eqref{eq: t0} and \eqref{eq: u0}, respectively.
	\end{enumerate}
\end{lem}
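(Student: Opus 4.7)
My plan is to bound the kernel pointwise, starting from the representation
\[\overline{K}_F(x,y)=e^{|x|^2-|y|^2}\int_0^1 \frac{\psi_m(t)}{\sqrt{1-t}}\,F\!\left(\frac{x-\sqrt{1-t}\,y}{\sqrt{t}}\right)\frac{e^{-\overline{u}(t)}}{t^{d/2+1}}\,dt\]
and applying hypothesis (i) to obtain $|F((x-\sqrt{1-t}\,y)/\sqrt{t})|\le C_\epsilon e^{\epsilon\overline{u}(t)}$. A direct expansion of $|x-\sqrt{1-t}\,y|^2$ and $|y-\sqrt{1-t}\,x|^2$ yields the identity $\overline{u}(t)-u(t)=|x|^2-|y|^2$, so the factor $e^{|x|^2-|y|^2}$ partially cancels and
\[|\overline{K}_F(x,y)|\le C_\epsilon\,e^{\epsilon(|x|^2-|y|^2)}\int_0^1 \frac{\psi_m(t)}{\sqrt{1-t}}\,\frac{e^{-(1-\epsilon)u(t)}}{t^{d/2+1}}\,dt.\]
Everything then reduces to estimating this $t$-integral uniformly for $y\in B^c(x)$, in the spirit of the analysis of the global part of the ``old'' Riesz transforms in \cite{sonsoles} and \cite{DS}, while carefully tracking the additional factor $\psi_m(t)/\sqrt{1-t}$ that distinguishes the new kernel.

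For case (i), when $b\le 0$, the expansion $u(t)=a/t-\sqrt{1-t}\,b/t-|x|^2$ and the sign of $-b$ give the pointwise lower bound $u(t)\ge a/t-|x|^2$. Substituting, pulling out the resulting factor $e^{(1-\epsilon)|x|^2}$, and performing the change of variables $s=(1-\epsilon)a/t$ converts the remaining integral into an upper incomplete Gamma integral from $(1-\epsilon)a$ to $\infty$. Since $a>d/2$ on $B^c(x)$, standard estimates on incomplete Gammas give $\lesssim a^{-d/2}e^{-(1-\epsilon)a}$, while the factors $\psi_m$ and $(1-t)^{-1/2}$ only contribute polynomial corrections in $a$ that are absorbed into $C_\epsilon$. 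Collecting exponents via $\epsilon(|x|^2-|y|^2)+(1-\epsilon)|x|^2-(1-\epsilon)a=\epsilon|x|^2-|y|^2$ produces the claimed bound.

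For case (ii), when $b>0$, the minimum of $u$ is attained at the interior point $t_0\in(0,1)$, and the natural approach is Laplace's method: from the convexity of $u$ one obtains a quantitative bound of the form $u(t)-u_0\gtrsim (t-t_0)^2/t_0^2$ on a controlled neighborhood of $t_0$, together with monotone decay away from it, which bounds the $t$-integral by $C_\epsilon t_0^{-d/2}e^{-(1-\epsilon)u_0}$, exactly as in \cite{sonsoles,DS}. The new weight $\psi_m(t)/\sqrt{1-t}$ produces only polynomial corrections, absorbed into $C_\epsilon$ at the price of slightly enlarging $\epsilon$, and the restriction $\epsilon<1/d$ in the statement is what keeps these losses compatible with the target factor $t_0^{-d/2}$.

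The main obstacle I foresee is precisely this combined treatment of the logarithmic factor $\psi_m$ and the integrable singularity $(1-t)^{-1/2}$, both absent in the ``old'' Riesz setting of \cite{DS}. When $t_0$ is close to $1$ (which happens when $b$ is small positive), these two sources of blow-up interact non-trivially with the Laplace localization near $t_0$; the constraint $y\in B^c(x)$, which forces $\sqrt{a^2-b^2}\ge d$ whenever $b>0$ and so prevents $t_0$ from approaching $1$ in an uncontrolled way relative to $a$, is what ultimately makes the tuning of $\epsilon$ work.
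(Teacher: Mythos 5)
Your initial reduction is exactly the paper's: applying the growth hypothesis to get $|F((x-\sqrt{1-t}y)/\sqrt t)|\le C_\epsilon e^{\epsilon\overline u(t)}$, using the identity $\overline u(t)-u(t)=|x|^2-|y|^2$, and factoring to reach $e^{\epsilon(|x|^2-|y|^2)}\int_0^1\frac{\psi_m(t)}{\sqrt{1-t}}\frac{e^{-(1-\epsilon)u(t)}}{t^{d/2+1}}\,dt$. For part (i) your change of variables to an incomplete-Gamma tail is also essentially what the paper does, though the paper is more explicit about the weight $\psi_m(t)/\sqrt{1-t}$ by splitting at $t=\frac12$ and converting $\int_{1/2}^1\psi_m(t)\frac{dt}{\sqrt{1-t}}$ into a convergent $\Gamma$-type integral with $w=-\log(1-t)$. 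Your assertion that the extra weight costs only sub-exponential corrections is correct, but it is the step that needs to be spelled out, not waved at.

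For part (ii) there is a genuine gap. You propose a Laplace localization at $t_0$ with a quadratic lower bound $u(t)-u_0\gtrsim(t-t_0)^2/t_0^2$, and you yourself identify the danger when $t_0$ is near $1$, where $\psi_m(t)/\sqrt{1-t}$ blows up. Your rescue is the claim that $\sqrt{a^2-b^2}>d$ on $B^c(x)\cap\{b>0\}$ keeps $t_0$ away from $1$, but that is false: $t_0=2\sqrt{a^2-b^2}/(a+\sqrt{a^2-b^2})\to1$ precisely when $b/a\to0$, which is perfectly compatible with $\sqrt{a^2-b^2}=|x+y||x-y|>d$ (take $|x|$, $|y|$ large and nearly orthogonal). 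Moreover the target estimate $C_\epsilon\,t_0^{-d/2}e^{-(1-\epsilon)u_0}e^{\epsilon(|x|^2-|y|^2)}$ has no room to absorb a factor such as $(1-t_0)^{-1/2}(\log\frac{1}{1-t_0})^{(m-2)/2}$ coming from the weight at $t_0$: enlarging $\epsilon$ only alters $e^{\epsilon(|x|^2-|y|^2)}$ and nothing forces $u_0$ or $|x|^2-|y|^2$ to be large when $t_0\to1$. The paper avoids the issue entirely: on $[0,\tfrac12]$, $\psi_m$ is bounded, so the ``old''-Riesz estimate $\frac{e^{-u(t)}}{t^{d/2}}\le C\frac{e^{-u_0}}{t_0^{d/2}}$ from \cite{sonsoles} applies directly; on $[\tfrac12,1]$ one uses only the crude bounds $u(t)\ge u_0$ and $t_0^{-d/2}\ge1$, and the remaining integral $\int_{1/2}^1\psi_m(t)\frac{dt}{\sqrt{1-t}}=\int_{\log2}^\infty w^{(m-2)/2}e^{-w/2}\,dw$ is a finite constant. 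No pointwise localization at $t_0$ is needed, and the case $t_0\to1$ causes no difficulty.
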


\begin{proof}If $b\le 0,$  then  \[\frac{a}{t}-|x|^2\le u(t)= \frac{a}{t}-\frac{\sqrt{1-t}}{t}b-|x|^2\le \frac{2a}{t}.\]
	\begin{align*}
	\overline{ K}_F(x,y)&=\left (\int_0^{\frac12}+\int_{\frac12}^1\right )\psi_m (t)\ F\left(\frac{x-\sqrt{1-t}y}{\sqrt{t}}\right)\frac{e^{-\overline{u}(t)}}{t^{\frac{d}{2}+1}}\frac{dt}{\sqrt{1-t}}\ e^{|x|^2-|y|^2}\\ &=I+II.
	\end{align*}
If $0<m\le 2,$ $\psi_m$ is bounded on $[0,1)$ and 
\begin{equation}\label{eq: m<2,b<0}
|\overline{ K}_F(x,y)|\lesssim \int_0^1\frac{e^{-(1-\epsilon)u(t)}}{t^{\frac{d}{2}+1}}\frac{dt}{\sqrt{1-t}}\ \  e^{\epsilon\left(|x|^2-|y|^2\right)}.	
\end{equation}
And from \cite[p. 500]{sonsoles}, $|\overline{ K}_F(x,y)|$ is bounded by $ e^{\epsilon |x|^2-|y|^2}.$

On the other hand, if $m>2$, taking into account that $\psi_m(t)$ is bounded on $\left[0,\frac12\right]$, we have
\begin{equation*}
|I|\lesssim \int_0^{\frac12}\frac{e^{-(1-\epsilon)\overline{u}(t)}}{t^{\frac{d}{2}+1}}\frac{dt}{\sqrt{1-t}}\ e^{|x|^2-|y|^2}\lesssim
\int_0^1\frac{e^{-(1-\epsilon)u(t)}}{t^{\frac{d}{2}+1}}\frac{dt}{\sqrt{1-t}}\ e^{\epsilon\left(|x|^2-|y|^2\right)},
\end{equation*}
and
\begin{align*}
|II|&\lesssim \int_{\frac12}^1(-\log(1-t))^{\frac{m-2}{2}}e^{-(1-\epsilon)\overline{u}(t)}\frac{dt}{\sqrt{1-t}}\ e^{|x|^2-|y|^2} \\ &=      \int_{\frac12}^1(-\log(1-t))^{\frac{m-2}{2}}e^{-(1-\epsilon)u(t)}\frac{dt}{\sqrt{1-t}} \ \  e^{\epsilon\left(|x|^2-|y|^2\right)}.
\end{align*}
	As before, $|I|\lesssim e^{\epsilon |x|^2-|y|^2}.$ On the other hand, from \cite{sonsoles} again with the change of variables $s=\frac{a}{t}-a$ we get that
\[|II|\lesssim e^{-(1-\epsilon)|y|^2}\frac{1}{\sqrt{a}} \int_0^a\left(\log \left(1+\frac{a}{s}\right)\right)^{\frac{m-2}{2}}e^{-(1-\epsilon)s}\frac{ds}{\sqrt{s}}\ \ e^{\epsilon\left(|x|^2-|y|^2\right)}\]
which, in turn, by the change of variables $w=\log \left(1+\frac{a}{s}\right),$ can be bounded as
\begin{align*}
|II|&\lesssim \frac{1}{\sqrt{a}}\int_{\log 2}^\infty w^{\frac{m-2}{2}}\frac{a}{(1-e^{-w})^2}\frac{e^{-\frac{w}{2}}(1-e^{-w})^{\frac12}}{\sqrt{a}}dw\ \  e^{\epsilon|x|^2-|y|^2} \\ &\lesssim \int_{0}^\infty w^{\frac{m-2}{2}}e^{-\frac{w}{2}}dw\ \   e^{\epsilon|x|^2-|y|^2}\le C_\epsilon e^{\epsilon|x|^2-|y|^2}. 
\end{align*}

Now, we assume $b>0.$ If $0<m\le 2$, we repeat the estimate of \eqref{eq: m<2,b<0}. For $m>2,$
\begin{align*}
|\overline{ K}_F(x,y)|& \lesssim \int_0^1 \psi_m (t)\ \frac{e^{-(1-\epsilon)\overline{u}(t)}}{t^{\frac{d}{2}+1}}\frac{dt}{\sqrt{1-t}}\ e^{|x|^2-|y|^2} \\& =\left(\int_0^{\frac12}+\int_{\frac12}^1\right) \psi_m (t)\ \frac{e^{-(1-\epsilon)u(t)}}{t^{\frac{d}{2}+1}}\frac{dt}{\sqrt{1-t}}\ e^{\epsilon\left(|x|^2-|y|^2\right)} \\&
=I+II.
\end{align*}
To estimate $I$ we use that $\psi_m$ is bounded and the estimates in \cite[p. 500]{sonsoles} to get
\[I\lesssim \frac{e^{-(1-\epsilon)u_0}}{t_0^{d/2}} e^{\epsilon\left(|x|^2-|y|^2\right)}.\]
For the other term, we have
\begin{align*}
II&\lesssim  \int_{\frac12}^1(-\log(1-t))^{\frac{m-2}{2}}e^{-(1-\epsilon)u(t)}\frac{dt}{\sqrt{1-t}} \ \  e^{\epsilon\left(|x|^2-|y|^2\right)}\\ &\lesssim  \int_{\frac12}^1(-\log(1-t))^{\frac{m-2}{2}}\frac{dt}{\sqrt{1-t}}\ e^{-(1-\epsilon)u_0} \ \  e^{\epsilon\left(|x|^2-|y|^2\right)}\\ &\lesssim \int_{0}^\infty w^{\frac{m-2}{2}}e^{-\frac{w}{2}}dw\ \ \frac{e^{-(1-\epsilon)u_0}}{t_0^{d/2}} \ \  e^{\epsilon\left(|x|^2-|y|^2\right)}\\
&\le C_\epsilon \frac{e^{-(1-\epsilon)u_0}}{t_0^{d/2}} e^{\epsilon\left(|x|^2-|y|^2\right)}.\qedhere
\end{align*}
\end{proof}

Now, we are in position to prove \eqref{eq: globalpart}. From Lemma \ref{lem: boundsexp}\ref{it: b<=0}, \begin{align*}\int_{B^c(x)\cap \{b\le 0\}}|\overline{ K}_F(x,y)| |f(y)|dy&\lesssim e^{\epsilon |x|^2}\int_{\mathbb{R}^d}|f(y)|d\gamma_d(y) \\ 
&\lesssim e^{\epsilon |x|^2}\left(\int_{\mathbb{R}^d}|f(y)|^{p^-}d\gamma_d(y)\right)^{1/p^-}.
\end{align*}

On the other hand, from Lemma \ref{lem: boundsexp}\ref{it: b>0}
\begin{align}\label{eq: KF,b>0}
\int_{B^c(x)\cap \{b> 0\}}|\overline{ K}_F(x,y)| |f(y)|dy&\lesssim \int_{B^c(x)}\frac{e^{-(1-\epsilon)u_0}}{t_0^{d/2}}e^{\epsilon\left(|x|^2-|y|^2\right)}|f(y)|dy\nonumber \\ & =e^{\frac{|x|^2}{p(x)}}\int_{B^c(x)}\frac{e^{-(1-\epsilon)u_0}e^{{\frac{|y|^2}{p(y)}}-\frac{|x|^2}{p(x)}}}{t_0^{d/2}}\nonumber \\ &\qquad \times  e^{\epsilon\left(|x|^2-|y|^2\right)} |f(y)|e^{-\frac{|y|^2}{p(y)}}dy,  
\end{align}
Since $p\in \mathcal{P}_{\gamma_d}^\infty(\mathbb R^d)$, from \cite{DS} we know that \begin{align*}\frac{e^{-(1-\epsilon)u_0}e^{{\frac{|y|^2}{p(y)}}-\frac{|x|^2}{p(x)}}}{t_0^{d/2}}&\lesssim \frac{e^{(|y|^2-|x|^2)\left(\frac{1}{p_\infty}-\frac{1-\epsilon}{2}\right)}}{t_0^{d/2}}e^{-\frac{1-\epsilon}{2}|x+y||x-y|}\\ & \lesssim |x+y|^d e^{(|y|^2-|x|^2)\left(\frac{1}{p_\infty}-\frac{1-\epsilon}{2}\right)}e^{-\frac{1-\epsilon}{2}|x+y||x-y|}. \end{align*} 
Then, applying this to \eqref{eq: KF,b>0} and taking into account that $||y|^2-|x|^2|\le |x+y||x-y|,$ we obtain
\begin{align*}
\int_{B^c(x)\cap \{b> 0\}}|\overline{ K}_F(x,y)| |f(y)|dy &\lesssim e^{\frac{|x|^2}{p(x)}}\int_{B^c(x)}|x+y|^d e^{(|y|^2-|x|^2)\left(\frac{1}{p_\infty}-\frac{1-3\epsilon}{2}\right)}  \\ &\qquad \times e^{-\frac{1-\epsilon}{2}|x+y||x-y|} |f(y)| e^{-\frac{|y|^2}{p(y)}}dy\\ &\lesssim e^{\frac{|x|^2}{p(x)}}\int_{B^c(x)}|x+y|^d e^{-\alpha_\infty|x+y||x-y|}|f(y)| e^{-\frac{|y|^2}{p(y)}}dy.
\end{align*}
Finally, we may choose $\epsilon$ in such a way that $\alpha_\infty=\frac{1-\epsilon}{2}-\left|\frac{1}{p_\infty}-\frac{1-3\epsilon}{2}\right|>0;$ for example we can take $0<\epsilon<\frac{1}{2p_\infty'}\wedge \frac1d.$

\bigskip

\section{Proof of main results}

In order to prove the $L^{p(\cdot)}$-boundedness of $\overline{R}_F$ in the Gaussian context, we will use the continuity properties of Calder\'on-Zygmund singular integrals on the Lebesgue setting.
	
 It is known that $p\in LH(\mathbb{R}^d)$ is sufficient for the boundedness on $L^{p(\cdot)}(\mathbb R^d)$ of singular integral operators (see \cite[Theorem~5.39]{CUF}). Here, it will be enough to consider singular integral operators with homogeneous kernels. That is, operators of the form
	\begin{equation}\label{eq: T}
	Tf(x)=\lim\limits_{\epsilon\rightarrow 0}\int_{\{|y|\ge \epsilon\}}\frac{\Omega(y')}{|y|^d}f(x-y)dy,
	\end{equation}
	for $f\in \mathcal S$ (the class of Schwartz functions), where $\Omega$ is defined on the unit sphere $S^{d-1}$, is integrable with zero average and $y'=y/|y|$. This kind of operators, and a wider class of singular integrals, are bounded on $L^p(\mathbb R^d)$ (see \cite{Duo, GCRF, Grafakos}). Moreover, the next result is valid on the variable setting.

	\begin{thm}[\cite{CUFN,CUF}]\label{thm: MHL-T}Let $p\in LH(\mathbb{R}^d)$ with $1<p^-\le p^+<\infty$. Then, the Hardy-Littlewood maximal operator $M_{\text{H-L}}$ and singular integrals with homogeneous kernels of the form \eqref{eq: T} are bounded on $L^{p(\cdot)}(\mathbb R^d)$.
	\end{thm}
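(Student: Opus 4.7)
The plan is to prove the two boundedness claims in succession, with the singular-integral statement deduced from the maximal-function statement by extrapolation. The argument is organized around the two log-Hölder hypotheses: $LH_0(\mathbb R^d)$ controls the local geometry via Diening's equivalence \eqref{eq: Diening-geom}, while $LH_\infty(\mathbb R^d)$ allows one to trade $p(\cdot)$ for the constant $p_\infty$ on the small-value part of $f$ at the cost of an integrable error (Lemma \ref{lem: changep}).

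First I would tackle $M_{\text{H-L}}$. For a non-negative $f$ with $\|f\|_{p(\cdot)}\le 1$, I split $f=f\chi_{\{f>1\}}+f\chi_{\{f\le 1\}}$. On the large part, Diening's inequality $|B|^{p_B^+-p_B^-}\ge C$ combined with a pointwise Jensen-type argument allows me to move the exponent inside the average: one bounds $\bigl(|B|^{-1}\int_B f\bigr)^{p(x)}$ by $|B|^{-1}\int_B f(y)^{p(y)}\,dy$ up to a harmless multiplicative constant, reducing modular estimates on each ball to their constant-exponent counterparts. On the small part, Lemma \ref{lem: changep} replaces $p(\cdot)$ by $p_\infty$ with a tail of the form $(e+|y|)^{-dp^-}$, after which the classical $L^{p_\infty}$-boundedness of $M_{\text{H-L}}$ closes the estimate. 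Summing the local modular bounds over a bounded-overlap cover of $\mathbb R^d$ and passing through the norm-modular equivalence yields the desired inequality.

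Second, to handle singular integrals of the form \eqref{eq: T}, I would invoke Rubio de Francia's extrapolation theorem in its variable-exponent form. Classical Calderón-Zygmund theory gives that $T$ is bounded on $L^p(w)$ for every $w\in A_p$ and every $1<p<\infty$, so the variable-exponent extrapolation theorem of \cite{CUF} reduces the claim to the $L^{p(\cdot)}$- and $L^{p'(\cdot)}$-boundedness of $M_{\text{H-L}}$. The first was just established. For the second, $p^->1$ and $p^+<\infty$ force $1<(p')^-\le (p')^+<\infty$, and the log-Hölder conditions are stable under the involution $p\mapsto p'$ (since $1/p-1/q=(q-p)/(pq)$ inherits the modulus of continuity of $p-q$ on $[p^-,p^+]$), so the first step applies verbatim to $p'$.

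The main obstacle is the careful summation in the maximal-function step: one must patch the constant-exponent local estimates on a Whitney-type covering so that the constants remain uniform, and one must check that the tail produced by the $LH_\infty$-replacement is genuinely integrable against the modular. This interplay of $LH_0$ (sharp local oscillations of $p$) with $LH_\infty$ (asymptotic behavior) is precisely why both conditions together are needed; once the maximal inequality is in place, the singular-integral bound becomes a black-box application of extrapolation.
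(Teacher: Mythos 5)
Your extrapolation step for the singular integrals is correct and is in substance the route of the cited sources: homogeneous Calder\'on--Zygmund operators satisfy weighted $L^{p_0}(w)$ bounds, and the variable-exponent Rubio de Francia extrapolation in \cite{CUF} transfers these to $L^{p(\cdot)}(\mathbb R^d)$ once $M_{\text{H-L}}$ is bounded on the relevant variable space(s); log-H\"older continuity and the bounds $1<p^-\le p^+<\infty$ do pass to $p'$, so that reduction is legitimate. (Recall the paper itself quotes Theorem \ref{thm: MHL-T} from \cite{CUFN,CUF} without proof, so the comparison is with those arguments and with the paper's own metric-measure analogue in \S\ref{sec: maximal-variable}.)

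The genuine gap is in your maximal-function step, in the large part $f_1=f\chi_{\{f>1\}}$. The pointwise bound $\bigl(\fint_B f_1\bigr)^{p(x)}\lesssim \fint_B f_1(y)^{p(y)}\,dy$ is indeed true (H\"older with exponent $p_B^-$, $f_1\ge 1$ on its support, the modular bound $\int_B f_1^{p(y)}dy\le 1$, and Diening's condition \eqref{eq: Diening-geom}), but it only yields, after taking the supremum over balls, $(M_{\text{H-L}}f_1(x))^{p(x)}\lesssim M_{\text{H-L}}\bigl(f_1^{p(\cdot)}\bigr)(x)$; integrating this over $\mathbb R^d$ would require $M_{\text{H-L}}$ to be bounded on $L^1$, which is false. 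Nor can ``summing local modular bounds over a bounded-overlap cover'' repair this: the supremum defining $M_{\text{H-L}}f(x)$ runs over balls of arbitrary size, so the estimate does not localize to a fixed cover. The missing idea --- and the only place $p^->1$ enters the large-part estimate --- is to run the Jensen/Diening argument with the normalized exponent $p(\cdot)/p^-$, which gives $(M_{\text{H-L}}f_1(x))^{p(x)}\lesssim \bigl(M_{\text{H-L}}(f_1^{p(\cdot)/p^-})(x)\bigr)^{p^-}$ plus admissible error terms, and then to invoke the boundedness of $M_{\text{H-L}}$ on $L^{p^-}$ with $p^->1$. This is exactly how \cite{CUFN} and \cite{D1} close the estimate, and it is also how the present paper closes the analogous estimate in the general measure setting: see Lemma \ref{lem: variableJensen}, Proposition \ref{prop: Jensenmaximal} and the proof of Theorem \ref{tmh: maximal}, where the exponent $q=p(\cdot)/p^-$ is introduced precisely for this purpose. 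With that correction (your treatment of the small part via Lemma \ref{lem: changep} and the $L^{p_\infty}$ bound is fine, since $p_\infty\ge p^->1$), the first half closes and the rest of your plan goes through.
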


We can now prove our main result.

\begin{thm}\label{thm: local+global}Let $p\in LH_0(\mathbb{R}^d)\cap\mathcal{P}_{\gamma_d}^\infty(\mathbb{R}^d)$, with $p^->1$. Then, there exists a positive constant $C$ such that
\[\|\overline{R}_F f\|_{p(\cdot),\gamma_d}\le C\|f\|_{p(\cdot),\gamma_d}\]
for every $f\in L^{p(\cdot)}(\gamma_d)$.
\end{thm}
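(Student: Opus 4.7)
The plan is to combine the pointwise bounds \eqref{eq: localpart} and \eqref{eq: globalpart} already obtained for the local and global parts of $\overline{R}_F$. As a preliminary, I would note that $p\in LH_0(\mathbb{R}^d)$ together with $\mathcal{P}_{\gamma_d}^\infty(\mathbb{R}^d)\subset LH_\infty(\mathbb{R}^d)$ (Remark~2.4 of \cite{DS}) gives $p\in LH(\mathbb{R}^d)$; hence Theorem~\ref{thm: MHL-T} delivers the $L^{p(\cdot)}(\mathbb{R}^d)$-boundedness of both $M_{\text{H-L}}$ and the homogeneous Calder\'on--Zygmund operator $\overline{T}_F$ from \eqref{eq: T-CZ}.

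For the local part, starting from \eqref{eq: localpart} I would exploit the bounded overlap of the families $\mathcal{F}$ and $\hat{\mathcal{F}}$ supplied by \cite[Lemma~3.1]{DS}, together with the fact that on each enlarged ball $\hat{B}$ the Gaussian density $e^{-|x|^2}$ is essentially constant. The resulting comparability between $\gamma_d$ and Lebesgue measure on each $\hat{B}$ lets me transfer the Lebesgue $L^{p(\cdot)}$-bounds above into Gaussian $L^{p(\cdot)}$-estimates for each summand, and the bounded overlap controls the sum. This follows the same route already used in \cite{DS} for the ``old'' transforms.

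For the global part I would treat each term of \eqref{eq: globalpart} separately. The first is immediate: choosing $\epsilon$ small enough that $\epsilon p^+ < 1$, the function $e^{\epsilon|x|^2}$ lies in $L^{p(\cdot)}(\gamma_d)$, while the finiteness of $\gamma_d$ together with $p^-\le p(\cdot)$ gives the embedding $L^{p(\cdot)}(\gamma_d)\hookrightarrow L^{p^-}(\gamma_d)$, so that $\|f\|_{p^-,\gamma_d}\lesssim \|f\|_{p(\cdot),\gamma_d}$. For the second term I would invoke the norm conjugate formula and test against $g$ with $\|g\|_{p'(\cdot),\gamma_d}\le 1$; after absorbing the Gaussian factor $e^{-|x|^2}$ into $e^{|x|^2/p(x)}$ via $1/p+1/p'=1$, the problem reduces to controlling the Lebesgue bilinear form
\[\iint \tilde g(x)\,P(x,y)\,\chi_{B^c(x)}(y)\,\tilde f(y)\,dy\,dx,\]
where $\tilde f(y)=|f(y)|e^{-|y|^2/p(y)}$ and $\tilde g(x)=g(x)e^{-|x|^2/p'(x)}$ have Lebesgue variable-exponent norms comparable to the original Gaussian norms of $f$ and $g$.

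The main obstacle is reducing this Lebesgue bilinear form to a constant-exponent estimate. The $\mathcal{P}_{\gamma_d}^\infty$ hypothesis is designed precisely for this: since $|p(x)-p_\infty|\lesssim |x|^{-2}$ and $p^->1$ force $|x|^2\bigl|1/p(x)-1/p_\infty\bigr|$ to stay uniformly bounded, the weights $e^{-|y|^2/p(y)}$ and $e^{-|x|^2/p'(x)}$ are pointwise comparable to their constant counterparts $e^{-|y|^2/p_\infty}$ and $e^{-|x|^2/p'_\infty}$. Lemma~\ref{lem: changep} then replaces $p(\cdot)$ by $p_\infty$ in the modulars modulo an integrable error $(e+|\cdot|)^{-dp^-}$, and the problem collapses to the boundedness of the kernel $P(x,y)\chi_{B^c(x)}(y)$ between $L^{p_\infty}$ and $L^{p'_\infty}$, whose exponential decay $e^{-\alpha_\infty|x-y||x+y|}$ and polynomial factor $|x+y|^d$ can be handled as in the constant-exponent treatments of \cite{sonsoles, MPS1, MPS2}. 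The delicate bookkeeping is to carry the change-of-exponent error terms through this reduction without losing the final $\|f\|_{p(\cdot),\gamma_d}$ control.
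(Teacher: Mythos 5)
Your outline agrees with the paper for the local part (use of Theorem~\ref{thm: MHL-T} after observing $p\in LH_0\cap\mathcal{P}_{\gamma_d}^\infty\subset LH$), and for the first term of~\eqref{eq: globalpart} (your choice $\epsilon<1/p^+$ is slightly more restrictive than the paper's $\epsilon<1/p_\infty$, but both are compatible with $\epsilon<\tfrac{1}{2p'_\infty}\wedge\tfrac1d$ and both work). The divergence is in how you handle the second term of~\eqref{eq: globalpart}.

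The paper does \emph{not} dualize. It fixes $\|f\|_{p(\cdot),\gamma_d}=1$, estimates the modular $\int_{\mathbb{R}^d}(\boldsymbol{G}f)^{p(x)}\,d\gamma_d$ directly, and, writing $g(y)=|f(y)|e^{-|y|^2/p(y)}=g_1+g_2$ with $g_1=g\chi_{\{g>1\}}$, normalizes the inner integral by the finite constant $D=\sup_x\int_{B^c(x)}P(x,y)g(y)\,dy$ (imported from \cite{DS}) so that $0\le\frac1D\int_{B^c(x)}P\,g_j\,dy\le1$. Only then can Lemma~\ref{lem: changep} be applied — it requires the function to lie in $[0,1]$ — to pass from $p(x)$ to $p_\infty$ for the $g_2$-term and from $p(y)$ to $p_\infty$ on $g_2$ itself, while the $g_1$-term is handled by $p(x)\ge p^-$ and $g_1>1$. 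The constant-exponent $L^{p^-}$ and $L^{p_\infty}$ bounds for the kernel $P$ then close the estimate. Your duality route does not bypass any of this: once you reduce to the Lebesgue bilinear form $\iint\tilde g\,P\chi_{B^c}\tilde f$, applying variable H\"older in either order brings you back to needing an $L^{p(\cdot)}(\mathbb{R}^d)$ bound for the kernel operator, and establishing that again forces a modular estimate with the same $D$-normalization and the same $g_1/g_2$ split. In other words, the ``delicate bookkeeping'' you defer is exactly the substantive content of the paper's proof, and the duality step is a detour rather than a simplification. Your observation that $e^{-|y|^2/p(y)}\approx e^{-|y|^2/p_\infty}$ under $\mathcal{P}_{\gamma_d}^\infty$ is correct and is implicitly used, but by itself it does not convert $\|\tilde f\|_{p(\cdot)}\lesssim1$ into $\|\tilde f\|_{p_\infty}\lesssim1$; the passage between variable and constant exponents genuinely requires Lemma~\ref{lem: changep} applied to functions bounded by $1$, hence the split. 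If you carry that out, your argument becomes essentially the paper's, rearranged through duality.
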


\begin{proof}

Since for every $x\in \mathbb{R}^d,$ $|\overline{R}_F f(x)|\lesssim \boldsymbol{L}f(x)+\boldsymbol{G}f(x),$ the proof of Theorem \ref{thm: local+global} will follow from the boundedness of $\boldsymbol{L}$ and $\boldsymbol{G}$ on $ L^{p(\cdot)}(\gamma_d)$.

The boundedness of $\boldsymbol{L}$ follows the same lines as the proof of \cite[Theorem 3.3]{DS}, by means of \eqref{eq: localpart} and applying Theorem \ref{thm: MHL-T} since the operator $\overline{T}_F$ given in \eqref{eq: T-CZ} falls in its scope.

In order to prove that 
\[\|\boldsymbol{G}f\|_{p(\cdot),\gamma_d}\lesssim \|f\|_{p(\cdot), \gamma_d},\] we will carry out the same steps we used in \cite{DS} for the ``old'' Gaussian Riesz transforms, with a few changes. Let $f\in L^{p(\cdot)}(\gamma_d)$ such that $\|f\|_{p(\cdot),\gamma_d}=1$. We will prove that $\int_{\mathbb{R}^d}(\boldsymbol{G}f(x))^{p(x)}d\gamma_d(x)\lesssim 1,$ then by homogeneity the general boundedness will yield.

It is easy to prove that
\begin{equation}\label{eq: intG<=1}
\int_{\mathbb{R}^d}e^{\epsilon p(x)|x|^2}\left(\int_{\mathbb{R}^d} |f(y)|^{p^-}d\gamma_d(y)\right)^{\frac{p(x)}{p^-}}d\gamma_d(x)\lesssim 1.
\end{equation}
Indeed, since $p\in \mathcal{P}_{\gamma_d}^\infty (\mathbb{R}^d),$ then $p(x)\le p_\infty +\frac{C_{\gamma_d}}{|x|^2},$ and thus
\[e^{\epsilon p(x)|x|^2}\le e^{\epsilon p_\infty |x|^2}e^{\epsilon C_{\gamma_d}}.\]
Also 
\[\int_{\mathbb{R}^d} |f(y)|^{p^-}d\gamma_d(y)\le 1+\int_{|f|>1}|f(y)|^{p^-}d\gamma_d(y)\le 1+\int_{\mathbb{R}^d}|f(y)|^{p(y)}d\gamma_d(y)\le 2,\] 
so we have
\begin{align*}\int_{\mathbb{R}^d}e^{\epsilon p(x)|x|^2}\left(\int_{\mathbb{R}^d} |f(y)|^{p^-}d\gamma_d(y)\right)^{\frac{p(x)}{p^-}}d\gamma_d(x)& \lesssim \int_{\mathbb{R}^d}e^{-(1-\epsilon p_\infty)|x|^2}2^{p(x)}dx\nonumber \\ &\lesssim 2^{p^+} \int_{\mathbb{R}^d}e^{-(1-\epsilon p_\infty)|x|^2}dx\nonumber 
\end{align*}
and this last integral is finite if we take $0<\epsilon<\frac{1}{p_\infty}.$ Thus, we obtain \eqref{eq: intG<=1} by choosing $0<\epsilon<\min\{\frac{1}{p_\infty},\frac{1}{2p'_\infty},\frac1d\}$. 

On the other hand, it can be proved that \begin{equation*}
D:=\sup_{x\in \mathbb{R}^d}\int_{B^c(x)}P(x,y)|f(y)| e^{-\frac{|y|^2}{p(y)}} dy<\infty,
\end{equation*}
see \cite{DS}. Then, if we set $g(y):=|f(y)| e^{-\frac{|y|^2}{p(y)}}=g_1(y)+g_2(y)$ with $g_1=g\chi_{\{g>1\}},$ as it was done in \cite{DS}, taking into account that $0\le \frac{1}{D}\int_{B^c(x)}P(x,y)g_1(y)dy\le 1,$ using Lemma \ref{lem: changep} conveniently, and realizing that both  $0\le \frac{1}{D}\int_{B^c(x)}P(x,y)g_2(y)dy\le 1$ and $0\le g_2(y)\le 1,$ we have
\begin{align*}
\int_{\mathbb{R}^d}& e^{|x|^2}\left(\int_{B^c(x)}P(x,y)g(y)dy\right)^{p(x)}d\gamma_d(x) \\ & \lesssim \int_{\mathbb{R}^d}\left(\frac{1}{D}\int_{B^c(x)}P(x,y)g_1(y)dy\right)^{p(x)} dx\\ & \quad +\int_{\mathbb{R}^d}\left(\frac{1}{D}\int_{B^c(x)}P(x,y)g_2(y)dy\right)^{p(x)} dx \\ &\lesssim \int_{\mathbb{R}^d}\left(\left(\int_{B^c(x)}P(x,y)g_1(y)dy\right)^{p^-}+\left(\int_{B^c(x)}P(x,y)g_2(y)dy\right)^{p_\infty}\right)dx+1\\ & \lesssim \int_{\mathbb{R}^d}(g_1(y)^{p^-}+g_2(y)^{p_\infty}) dy+1 \\ &\lesssim \int_{\mathbb{R}^d}|f(y)|^{p(y)}d\gamma_d(y)+\int_{\mathbb{R}^d}g_2(y)^{p(y)}dy+1\lesssim 1.
\end{align*}
Thus, $\|\boldsymbol{G}f\|_{p(\cdot),\gamma_d}\lesssim 1.$ And this ends the proof of Theorem \ref{thm: local+global}.
\end{proof}

\section{The non-centered Gaussian maximal function}\label{sec: maximal-variable}

Now let us introduce here the non-centered maximal function associated to the non-standard Gaussian measure, i.e., \[\mathcal{M}_{\gamma_d} f(x)=\sup\limits_{B\ni x} \frac{1}{\gamma_d(B)}\int_B |f(y)| d\gamma_d(y),\]
	where the supremum is taken over every ball $B$ of $\mathbb R^d$ containing $x$. 
	
	It is known that this maximal function  is a bounded operator on $L^p(\gamma_d)$ for $1<p\le \infty$ (see \cite{FSSU}) and it is not weak-type $(1,1)$ (see \cite{Sjogren}).
	
	Under certain conditions on the exponent $p(\cdot)$, we are going to prove the boundedness of $\mathcal{M}_{\gamma_d}$ on $L^{p(\cdot)}(\gamma_d)$ whenever $1<p^-\le p^+<\infty$. 
	
	As a matter of fact we will prove a result that contains this one where we extend the space $\mathbb{R}^d$ to a metric space $X$ in which a positive $\sigma$-finite measure $\mu$ is defined such that $0<\mu(B)<\infty$ for all ball $B$ in $X.$ So the non-centered maximal function associated to $\mu$ is \[\mathcal{M}_\mu f(x)=\sup\limits_{B\ni x} \frac{1}{\mu(B)}\int_B |f(y)| d\mu(y),\]
	where the supremum is taken over every ball $B$ of $X$ containing $x$. 
	
	In  \cite[Theorem~1.7]{AHH} they gave a proof of this result for the centered maximal function  \[\mathcal{M}_\mu^c f(x)=\sup\limits_{r>0} \frac{1}{\mu(B(x,r))}\int_{B(x,r)} |f(y)| d\mu(y).\] In their technique of proof, they used, among other things, that this centered maximal function is weak-type $(1,1)$ which in the case of the non-centered maximal function this statement need not be true (see \cite{Sjogren}).
	
For our purposes, $\mathcal{M}_\mu$ will be defined pointwise for $\mu$-a.e. $x\in X.$ Since we are dealing with averages over balls, and with pointwise estimates for them (see Proposition \ref{prop: Jensenmaximal}) we are going to assume, when calculating the maximal function, that the supremum over all balls will coincide with the supremum over a collection of countable balls that cover all of $X.$ That is, there exists a countable family of balls $\mathcal{F}$ such that $\bigcup \mathcal{F}=X$ and
\[\mathcal{M}_\mu f(x)=\sup_{B\in \mathcal{F}, B\ni x}\frac{1}{\mu(B)}\int_B|f(y)|\, d\mu(y).\]
From now on, we are going to consider just those balls belonging to $\mathcal{F}, $ without mentioning it. Let us remark that this is true for the case we are concerned, that is, when $\mu=\gamma_d.$

We give some notation. We will denote by $\mathcal{P}(X,d,\mu)$ the set of bounded exponents over the metric space $(X,d)$ with respect to a positive $\sigma$-finite measure $\mu$. It can be proved that the variable Lebesgue space $L^{p(\cdot)}(\mu)$ is a Banach space (see \cite[Lemma 3.1]{HHP}). We will still denote, as before, by $\varrho_{p(\cdot),\mu}$ and $\|\cdot\|_{p(\cdot),\mu}$ the modular and the norm on $L^{p(\cdot)}(\mu)$, respectively.
	
\begin{defn}
		Let $(X,d)$ be a metric space in which a positive $\sigma$-finite measure $\mu$ is defined such that for every ball $B$ in $X$, $0<\mu(B)<+\infty.$ We say that a $\mu$-measurable function $p:X\to [1,+\infty)$ belongs to $\mathcal{P}_\mu(X)$ if there exists a constant $c_\mu$ with $0<c_\mu<1$ such that 
		\begin{equation}\label{eq: mu-ballcondition}
			\mu(B)^{p_B^+ - p_B^-}\ge c_\mu,
		\end{equation}
		for all ball $B$ in $X.$
	\end{defn}

The relationship between this measure $\mu$ and the exponent function $p$ expressed in \eqref{eq: mu-ballcondition} says a lot about the behaviour of $p$ both locally and its decay at infinity if we are dealing with an unbounded space $X$. For the case of the non-standard Gaussian measure $\gamma_d,$ we will get necessary and sufficient continuity conditions which $p$ must meet in order to hold inequality \eqref{eq: mu-ballcondition} true.

In this context we will prove the following theorem.
\begin{thm}\label{tmh: maximal}Let $\mu$ be a positive $\sigma$-finite measure on $X$ a metric space such that $0<\mu(B)<\infty$ for every ball $B$. Let $p\in \mathcal{P}_\mu(X)$ with $p^->1$. 
	
	If $\mu(X)=\infty$, we also assume that there exists $p_\infty \in [1,\infty)$ such that $1\in L^{s(\cdot)}(\mu)$, where $\frac{1}{s(x)}=\left|\frac{1}{p(x)}-\frac{1}{p_\infty}\right|$.
	
	Then, if $\mathcal{M}_\mu$ is bounded on $L^{p^-}(\mu)$, we have
	\[\|\mathcal{M}_\mu f\|_{p(\cdot)}\le K \|f\|_{p(\cdot)}\]
	for every $f\in L^{p(\cdot)}(\mu)$.
\end{thm}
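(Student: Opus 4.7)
The plan adapts Diening's approach for the variable-exponent Hardy--Littlewood maximal function to this metric-measure setting, with the ball condition \eqref{eq: mu-ballcondition} playing the role of the log-H\"older condition and the $L^{p^-}(\mu)$ boundedness of $\mathcal{M}_\mu$ serving as the sole harmonic-analytic input. By the homogeneity of the Luxemburg norm, it suffices to prove the modular inequality: if $\varrho_{p(\cdot),\mu}(f)\le 1$ then $\varrho_{p(\cdot),\mu}(\mathcal{M}_\mu f/K)\le 1$ for some constant $K$ independent of $f$. Paralleling the splitting used in the proof of Theorem~\ref{thm: local+global}, I would write $f=f_1+f_2$ with $f_1=f\chi_{\{|f|>1\}}$ and $f_2=f\chi_{\{|f|\le 1\}}$, so that $\mathcal{M}_\mu f\le \mathcal{M}_\mu f_1+\mathcal{M}_\mu f_2$, and handle the two pieces by different mechanisms.

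For $f_1$: since $|f_1|>1$ on its support, $|f_1|^{p^-}\le |f_1|^{p(y)}$, hence $\|f_1\|_{p^-,\mu}\le 1$ and by hypothesis $\|\mathcal{M}_\mu f_1\|_{p^-,\mu}\lesssim 1$. The heart of the argument is a pointwise Diening-type upgrade
\[
(\mathcal{M}_\mu f_1(x))^{p(x)}\lesssim \bigl(\mathcal{M}_\mu(f_1^{p(\cdot)/p^-})(x)\bigr)^{p^-};
\]
integrating this and applying $L^{p^-}(\mu)$ boundedness of $\mathcal{M}_\mu$ to the inner function $f_1^{p(\cdot)/p^-}$ (whose $L^{p^-}$-modular equals $\int f_1^{p(y)}\,d\mu\le 1$) yields $\int(\mathcal{M}_\mu f_1)^{p(x)}\,d\mu\lesssim 1$. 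To establish the pointwise estimate, I would first use $f_1\ge 1$ on its support to get $f_1\le f_1^{p(y)/p^-}$, so $\mathcal{M}_\mu f_1\le \mathcal{M}_\mu(f_1^{p(\cdot)/p^-})=:N$. Then for any ball $B\ni x$, setting $\tilde A_B=\frac{1}{\mu(B)}\int_B f_1^{p(y)/p^-}\, d\mu$, Jensen's inequality and $\varrho_{p(\cdot),\mu}(f)\le 1$ give $\tilde A_B\le \mu(B)^{-1/p^-}$; combining this with the ball condition \eqref{eq: mu-ballcondition} applied on balls where $\tilde A_B\ge 1$ (which forces $\mu(B)\le 1$) absorbs the exponent discrepancy into a uniform constant depending only on $c_\mu$, $p^-$ and $p^+$.

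For $f_2$: since $|f_2|\le 1$, $\mathcal{M}_\mu f_2\le 1$ pointwise. If $\mu(X)<\infty$, the modular is trivially bounded by $\mu(X)$ and we are done. If $\mu(X)=\infty$, I would first establish a $\mu$-analogue of Lemma~\ref{lem: changep}: via a Young-type inequality and the hypothesis $\|1\|_{s(\cdot),\mu}<\infty$, one shows that for any $0\le G\le 1$,
\[
\int_X G^{p(y)}\, d\mu\lesssim \int_X G^{p_\infty}\, d\mu+C_0,
\]
with $C_0$ controlled by $\|1\|_{s(\cdot),\mu}$. Applied to $G=\mathcal{M}_\mu f_2/K'$, this reduces the modular estimate to bounding $\int(\mathcal{M}_\mu f_2)^{p_\infty}\,d\mu$, which follows from the $L^{p_\infty}(\mu)$ boundedness of $\mathcal{M}_\mu$ obtained by interpolating the hypothesized $L^{p^-}(\mu)$ bound with the trivial $L^\infty$ bound (using $p_\infty\ge p^->1$).

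The main obstacle is the Diening-type pointwise estimate for $f_1$. The delicate point is that the ball condition controls only the \emph{local} oscillation $p_B^+-p_B^-$, whereas the modular forces a comparison of $p(x)$ with the \emph{global} infimum $p^-$; the gap $p_B^--p^-$ is not directly controlled. Bridging this requires performing Jensen's inequality at the right scale and exploiting the forced smallness $\mu(B)\le 1$ in the regime where averages are large, so that the implicit constants depending on $p_B^\pm$ can ultimately be replaced by constants depending only on $p^\pm$ and $c_\mu$. A secondary but nontrivial technical task is the proof of the $\mu$-analogue of Lemma~\ref{lem: changep} in this abstract setting, where the role of the integrable weight $(e+|y|)^{-dp^-}$ of the Euclidean case is played by a function whose integrability is exactly the content of the hypothesis $1\in L^{s(\cdot)}(\mu)$.
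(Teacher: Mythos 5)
Your overall plan is sound and close in spirit to the paper's (which follows \cite{AHH}): both ultimately split $f$ at the level $|f|=1$, handle the large piece with a Diening-type pointwise estimate and the assumed $L^{p^-}(\mu)$ boundedness, and handle the small piece with a change-of-exponent estimate whose error is integrable precisely because $1\in L^{s(\cdot)}(\mu)$. The main organizational difference is that you carry out the split at the top level, whereas the paper builds it into its Jensen-type inequality (Lemma~\ref{lem: variableJensen}, split into $g_1,g_2,g_3$), then deduces the pointwise maximal estimate (Proposition~\ref{prop: Jensenmaximal}), and applies the latter with the renormalized exponent $q=p(\cdot)/p^-$ followed by the $L^{p^-}(\mu)$ boundedness; also, the paper never needs interpolation to $L^{p_\infty}$, since after the renormalization the error function $\gamma^{s(\cdot)/2}$ already lies in $L^{p^-}(\mu)$.

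There is, however, a genuine gap in your sketch for the large part, and you correctly flag where it sits but do not close it. Writing $A_B=\fint_B f_1\,d\mu$ and $\tilde A_B=\fint_B f_1^{p(\cdot)/p^-}\,d\mu$, the route you indicate is $A_B\le\tilde A_B$ followed by controlling $\tilde A_B^{\,p(x)-p^-}$ via the ball condition and the bound $\tilde A_B\le\mu(B)^{-1/p^-}$. This step actually fails: taking $p\equiv P>p^-$ on $B$ and $f_1\equiv c=\mu(B)^{-1/P}$ (which saturates the modular constraint) gives $\tilde A_B=\mu(B)^{-1/p^-}$ and $\tilde A_B^{\,p(x)-p^-}=\mu(B)^{-(P-p^-)/p^-}\to\infty$ as $\mu(B)\to 0$, while \eqref{eq: mu-ballcondition} yields nothing since $p_B^+-p_B^-=0$ here. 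The ball condition controls $\mu(B)^{-(p_B^+-p_B^-)}$, and the discrepancy $p(x)-p^-$ contains the uncontrolled $p_B^--p^-$, which is exactly the point you raise. The correct mechanism (this is the content of Lemma~\ref{lem: A1} and the $I_1$ estimate in Lemma~\ref{lem: variableJensen}) is to avoid comparing $\tilde A_B^{\,p(x)}$ with $\tilde A_B^{\,p^-}$. Instead estimate $A_B$ directly by H\"older at the \emph{local} exponent $p_B^-$, namely $A_B\le\bigl(\fint_B f_1^{p(\cdot)}\bigr)^{1/p_B^-}$, so that raising to $p(x)$ produces the discrepancy $(p(x)-p_B^-)/p_B^-\le (p_B^+-p_B^-)/p_B^-$, which \eqref{eq: mu-ballcondition} does control; then combine with the Jensen lower bound $\tilde A_B\ge A_B^{\,p_B^-/p^-}$ to convert $A_B^{\,p_B^-}$ into $\tilde A_B^{\,p^-}$. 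With this change your claim $(\delta\mathcal M_\mu f_1(x))^{p(x)}\le(\mathcal M_\mu(f_1^{p(\cdot)/p^-})(x))^{p^-}$ does hold with $\delta=c_\mu$, and the rest of your argument for $f_1$ goes through.
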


\begin{rem} In the Euclidean setting, the assumption $1\in L^{s(\cdot)}(\mathbb R^d)$ is nothing but Nekvinda's integral condition on the exponent $p$ (see \cite{Nek}). This property is strictly weaker than $p\in LH_\infty(\mathbb R^d)$ (see for example \cite[Proposition 4.9]{CUF}) but also sufficient, together with the local log-H\"older condition $LH_0(\mathbb R^d)$, for the boundedness of $M_{H-L}$ on $L^{p(\cdot)}(\mathbb R^d)$ as proved by Nekvinda in the mentioned article \cite{Nek}.
\end{rem}

\begin{rem}
	The proof can be done as in \cite{AHH}. But in their proof the authors obtain the result for the centered maximal function. Indeed, they use that this maximal function is weak-type $(1,1)$ which for the non-centered one this claim need not be true as aforementioned, see \cite{Sjogren}. However, we extend this result, following closely their proof, to the non-centered maximal function.
	
	We should also note that in the lemmas and theorems their proof is based on, it is missing the phrase ``$\mu$-almost everywhere'' since the exponent function may not necessarily be continuous under the given conditions.
\end{rem}

We recall some auxiliary results given in \cite{AHH} for the sake of completeness, and we state them taking into account the previous remark.

Next lemma corresponds to \cite[Lemma A1]{AHH}. Here, we establish the precise constant $\beta\in (0,1)$ for our case, given that $p^+<\infty$. Indeed, $\beta=c_\mu$.

\begin{lem}\label{lem: A1}
Let $p\in \mathcal{P}_\mu (X).$ Then, 
\begin{equation*}
\left(c_\mu \left(\frac{\lambda}{\mu(B)}\right)^{\frac{1}{p^-}}\right)^{p(x)}\le \frac{\lambda}{\mu(B)},
\end{equation*}
for every $ \lambda\in [0,1],$ for $\mu$-a.e. $x\in B$ and for each ball $ B$ in $X.$
\end{lem}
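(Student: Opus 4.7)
The strategy is a direct case analysis on the size of $\lambda/\mu(B)$, combining the defining ball condition $\mu(B)^{p_B^+-p_B^-}\ge c_\mu$ with the elementary monotonicity of $t\mapsto t^r$ (whose direction reverses across $t=1$). Throughout I interpret $p^-$ as the local infimum $p_B^-$, since this is the quantity naturally controlled by membership in $\mathcal{P}_\mu(X)$; with any strictly smaller choice the claim would fail in general when $\mu(B)$ is very small.

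In the first regime $\lambda/\mu(B)\le 1$: since $p(x)\ge p_B^-$ I have $1/p_B^-\ge 1/p(x)$, and the base lies in $(0,1]$, so monotonicity gives $(\lambda/\mu(B))^{1/p_B^-}\le(\lambda/\mu(B))^{1/p(x)}$. Raising to the $p(x)$-th power yields $(\lambda/\mu(B))^{p(x)/p_B^-}\le \lambda/\mu(B)$, and multiplication by $c_\mu^{p(x)}\le 1$ closes this case. In the opposite regime $\lambda/\mu(B)>1$ one necessarily has $\mu(B)<\lambda\le 1$, so $\mu(B)<1$. I rearrange the target as
\[c_\mu^{p(x)}\Bigl(\tfrac{\lambda}{\mu(B)}\Bigr)^{(p(x)-p_B^-)/p_B^-}\le 1,\]
where the exponent is nonnegative. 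Using $\lambda\le 1$ to bound $\lambda/\mu(B)\le 1/\mu(B)$, then $p(x)-p_B^-\le p_B^+-p_B^-$ together with $1/\mu(B)\ge 1$, I get
\[\Bigl(\tfrac{\lambda}{\mu(B)}\Bigr)^{(p(x)-p_B^-)/p_B^-}\le \Bigl(\tfrac{1}{\mu(B)}\Bigr)^{(p_B^+-p_B^-)/p_B^-}\le c_\mu^{-1/p_B^-},\]
where the last step is the ball condition raised to the $1/p_B^-$ power. Hence the quantity of interest is bounded by $c_\mu^{p(x)-1/p_B^-}\le 1$, where the final inequality uses $c_\mu<1$ together with $p(x)\ge 1\ge 1/p_B^-$.

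There is no substantial obstacle at the computational level; the delicate point is tracking the direction of the inequality when $t\mapsto t^r$ is applied, which is precisely what forces the split on $\lambda/\mu(B)$. A uniform argument fails because the comparison $(\lambda/\mu(B))^{1/p_B^-}\le(\lambda/\mu(B))^{1/p(x)}$ is only available when the base is $\le 1$; in the opposite regime one must instead use the ball condition to absorb the large factor $(\lambda/\mu(B))^{(p(x)-p_B^-)/p_B^-}$ against the small factor $c_\mu^{p(x)}$.
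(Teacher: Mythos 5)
Your proof is correct. The paper itself does not give an argument for this lemma; it simply cites [AHH, Lemma A1] and observes that, because $p^+<\infty$, the constant $\beta$ there may be taken equal to $c_\mu$. Your direct verification is the natural one: split on whether $\lambda/\mu(B)\le 1$ (where monotonicity of $t\mapsto t^r$ on $(0,1]$ alone suffices, the ball condition contributing nothing) or $\lambda/\mu(B)>1$ (where $\mu(B)<1$, and the ball condition $\mu(B)^{p_B^+-p_B^-}\ge c_\mu$ is exactly what absorbs the growing factor $(\lambda/\mu(B))^{(p(x)-p_B^-)/p_B^-}$ against $c_\mu^{p(x)}$). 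You are also right to read the $p^-$ in the statement as the local infimum $p_B^-$: with the global infimum the inequality is false in general (take $p$ constant on $B$ with small $\mu(B)$ while $p^-$ is attained far away), and the local reading is the one consistent with the way the lemma is invoked inside the proof of Lemma 5.7, where it is applied precisely with the exponent $1/p_B^-$. The paper's blanket convention $p^-=p^-_{\mathbb{R}^d}$ is being quietly overridden here, and your proposal makes that explicit. No gaps.
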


The above condition yields the following estimate, that will lead to a pointwise inequality for $\mathcal{M}_\mu$.

\begin{lem}\label{lem: variableJensen}
Let $p\in \mathcal{P}_\mu (X)$ be given and define $q:X\times X\to [1,+\infty]$ as follows \[\frac{1}{q(x,y)}=\max\left\{\frac{1}{p(x)}-\frac{1}{p(y)},0\right\}.\] Then, for every $\gamma\in (0,1)$, there exists $\delta\in (0,1)$ such that 
\begin{equation}\label{eq: variableJensen}
\left(\delta\fint_B|f(y)|\, d\mu(y)\right)^{p(x)}\le \fint_B |f(y)|^{p(y)}\, d\mu(y)+\fint_B \gamma^{q(x,y)}\, d\mu(y),
\end{equation}
for every ball $B$ in $X,$ $\mu$-a.e. $x\in B,$ and $f\in L^{p(\cdot)}(\mu)$ with $\|f\|_{p(\cdot),\mu}\le \frac{1}{2}$. Here, we set $\gamma^\infty=0.$
\end{lem}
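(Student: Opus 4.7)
Fix $\gamma\in(0,1)$. The strategy is to split $|f|=g_1+g_2$ with $g_1=|f|\chi_{\{|f|>1\}}$ and $g_2=|f|\chi_{\{|f|\le 1\}}$, estimate each piece separately, and combine through the convexity bound $(a+b)^{p(x)}\le 2^{p(x)-1}(a^{p(x)}+b^{p(x)})$. Concretely, the goal is to establish
\begin{equation*}
\left(c_\mu\fint_B g_1\, d\mu\right)^{p(x)}\le \fint_B|f|^{p(y)}\, d\mu,\qquad \left(\gamma\fint_B g_2\, d\mu\right)^{p(x)}\le \fint_B|f|^{p(y)}\, d\mu+\fint_B\gamma^{q(x,y)}\, d\mu,
\end{equation*}
because then the choice $\delta=\min(c_\mu,\gamma)/2$ absorbs the factor $2^{p(x)-1}$ in the combination. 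The hypothesis $\|f\|_{p(\cdot),\mu}\le 1/2$ ensures, via the unit-ball property of the Luxemburg norm, that $\lambda:=\int_B|f|^{p(y)}\, d\mu\in[0,1]$, which is exactly what is needed to invoke Lemma~\ref{lem: A1}.

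\textbf{The piece $g_1$.} On $\{|f|>1\}$ and because $p(y)/p^-\ge 1$, one has $|f|\le|f|^{p(y)/p^-}$; Jensen's inequality applied to the convex map $t\mapsto t^{p^-}$ then yields $\left(\fint_B g_1\, d\mu\right)^{p^-}\le \fint_B|f|^{p(y)}\, d\mu=\lambda/\mu(B)$. Combining this with Lemma~\ref{lem: A1} (valid since $\lambda\in[0,1]$) and the monotonicity of $t\mapsto t^{p(x)}$ produces the first required inequality.

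\textbf{The piece $g_2$, and main obstacle.} By Jensen, $\left(\gamma\fint_B g_2\, d\mu\right)^{p(x)}\le \fint_B (\gamma g_2(y))^{p(x)}\, d\mu(y)$, and the right-hand integral is split over $\{y\in B:p(y)\le p(x)\}$ and $\{y\in B:p(y)>p(x)\}$. On the first set, $\gamma^{q(x,y)}=0$ by the convention $\gamma^{\infty}=0$, while $\gamma g_2\in[0,1]$ and $p(x)\ge p(y)$ give $(\gamma g_2)^{p(x)}\le(\gamma g_2)^{p(y)}\le|f|^{p(y)}$. On the second set, I would apply the weighted AM--GM inequality $s^\theta t^{1-\theta}\le\theta s+(1-\theta)t$ with $\theta=p(x)/p(y)\in(0,1)$, $s=\gamma^{-p(y)}(\gamma g_2)^{p(y)}$ and $t=\gamma^{q(x,y)}$, to obtain the pointwise estimate $(\gamma g_2(y))^{p(x)}\le g_2(y)^{p(y)}+\gamma^{q(x,y)}$. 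The key identity making $s^\theta t^{1-\theta}=(\gamma g_2)^{p(x)}$ is $q(x,y)(1-\theta)=p(x)$, which follows algebraically from $q(x,y)=p(x)p(y)/(p(y)-p(x))$. This rescaling of $g_2$ by $\gamma$ before applying the Young step is the delicate point I expect to be the main obstacle: without it one would produce an uncontrolled factor $\gamma^{-p(y)}g_2^{p(y)}=(g_2/\gamma)^{p(y)}$, but with the rescaling the $\gamma$-powers cancel exactly and leave the clean term $g_2^{p(y)}$. Integration over $B$ and combination of the two pieces then complete the proof.
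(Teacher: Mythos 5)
Your proof is correct, and it is a genuine (if modest) streamlining of the paper's argument rather than a reproduction of it. Both proofs rest on the same three pillars — Jensen's inequality, Lemma~\ref{lem: A1}, and Young's (weighted AM--GM) inequality applied at the pointwise level on the set $\{p(y)>p(x)\}$ — and the rescaling by $\gamma$ that you flag as the ``main obstacle'' is exactly the delicate point the paper also handles (there it appears as the requirement $\beta\le\gamma$ before applying \cite[Lemma 3.2.15]{DHHR}). The difference is in the decomposition. The paper first writes $|f|=f_0+f_1$ with $f_0=\max\{|f|-1,0\}$, $f_1=\min\{|f|,1\}$, invokes the embedding \cite[Theorem 3.3.11]{DHHR} to control this pair, and then splits each of $f_0,f_1$ into three pieces $g_1,g_2,g_3$ according to both the size of $g$ and the sign of $p(y)-p(x)$, producing a total of six terms (two of which vanish). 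You instead split $|f|$ once, into $g_1=|f|\chi_{\{|f|>1\}}$ and $g_2=|f|\chi_{\{|f|\le 1\}}$, and perform the $\{p(y)\le p(x)\}$ versus $\{p(y)>p(x)\}$ split only after Jensen, at the level of the integrand. This eliminates the detour through the DHHR embedding, whose norm estimate the paper in fact never uses (it only needs $\varrho_{p(\cdot),\mu}(f)\le 1$, which follows directly from $\|f\|_{p(\cdot),\mu}\le 1$), and it shows that the constant $1/2$ in the hypothesis can be relaxed to $1$ for this lemma. Two small remarks: your computation $\left(\fint_B g_1\right)^{p^-}\le\fint_B|f|^{p(y)}\,d\mu$ is cleanest if you apply Jensen to $g_1$ itself and then use $g_1^{p^-}\le|f|^{p(y)}$ on $\{|f|>1\}$, rather than pre-majorizing $|f|\le|f|^{p(y)/p^-}$; and you should state explicitly that the final convexity step uses $\delta\fint_B|f|\le\frac12\left(c_\mu\fint_B g_1+\gamma\fint_B g_2\right)$ together with the monotonicity of $t\mapsto t^{p(x)}$, which is exactly where $\delta=\min(c_\mu,\gamma)/2$ enters.
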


\begin{proof}

Taking into account the embedding result given in \cite[Theorem 3.3.11]{DHHR}, for $f\in L^{p(\cdot)}(\mu)$ there exist $f_0:=\max\{|f|-1,0\}\in L^{\frac{p(\cdot)}{p^-}}(\mu)$ and $f_1:=\min\{|f|,1\}\in L^\infty(\mu)$  such that $|f|=f_0+f_1$ and $\|f_0\|_{p(\cdot)/p^-,\mu}+\|f_1\|_\infty\le 2\|f\|_{p(\cdot),\mu}\le 1,$ since we shall assume $\|f\|_{p(\cdot),\mu}\le \frac12$.

 Let $B\subset X$ be a ball and $E_B\subset B$ with $\mu(E_B)=0$ such that for any $x\in B\setminus E_B$, $1\le p(x)<+\infty.$ Fix such an $x$.

Let $\beta\in (0,1)$ be the constant obtained in Lemma \ref{lem: A1}. We can also assume $\beta\le \gamma.$  Now we will call $g$ to either $f_0$ or $f_1,$ and split it into the sum of three functions:
\begin{align*}
g_1(y)&=g(y)\chi_{\{z\in B: |g(z)|>1\}}(y), \\
g_2(y)&=g(y)\chi_{\{z\in B:|g(z)|\le 1, p(z)\le p(x)\}}(y),\\
g_3(y)&=g(y)\chi_{\{z\in B:|g(z)|\le 1, p(z)> p(x)\}}(y).
\end{align*} 
Let us remark that $g\chi_B=g_1+g_2+g_3.$
 Let us also observe that $(f_1)_1\equiv 0.$ 

By the convexity of $t\mapsto t^{p(x)}$, 
\[\left(\frac{\beta}{3}\fint_Bg(y)\, d\mu(y)\right)^{p(x)}\le \frac{1}{3}\sum_{j=1}^3\left(\beta\fint_Bg_j(y)\, d\mu(y)\right)^{p(x)}=:\frac{1}{3}(I_1+I_2+I_3).\]
 Let us prove that \[I_j\le \fint_B g(y)^{p(y)}\, d\mu(y), \quad j=1,2\] and 
 \[I_3\le \fint_B g(y)^{p(y)}\, d\mu(y)+\fint_B \gamma^{q(x,y)}d\mu(y).\]

By applying H\"older's inequality and taking into account that $t\mapsto t^{p(x)}$ is a non-decresasing function, we get
\[I_1\le \left(\beta \left(\fint_B g_1(y)^{p_B^-}d\mu(y)\right)^{\frac{1}{p_B^-}}\right)^{p(x)}.\]
Since $g_1=0$ or $g_1>1$ and $p_B^- \le p(y)$ $\mu$-a.e. $y\in B$, we have $g_1^{p_B^-}(y)\le g_1^{p(y)}(y)$ $\mu$-a.e. $y\in B.$ Then 
\[I_1\le \left(\beta \left(\fint_B g_1(y)^{p(y)}d\mu(y)\right)^{\frac{1}{p_B^-}}\right)^{p(x)}.\]
Since $(f_1)_1=0$ then $I_1=0$ for $g=f_1$. On the other hand, since $\|f\|_{p(\cdot),\mu}\le \frac{1}{2} $ then $\int_B (f_0)_1^{p(y)}\, d\mu(y)\le 1.$ So by applying Lemma \ref{lem: A1} with $\lambda=\fint_Bg_1^{p(y)}\, d\mu(y),$ $0\le \lambda\le 1,$ we get \[I_1\le \fint_B g_1(y)^{p(y)}\, d\mu(y)\le \fint_B g(y)^{p(y)}\, d\mu(y).\]
Jensen’s inequality implies that
\[I_2 \le \fint_B
(\beta|g_2(y)|)^{p(x)}\, d\mu(y).\]
Since $\beta|g_2(y)| \le |g_2(y)| \le 1$ and $t^{p(x)} \le t^{p(y)}$ for all $t \in [0, 1]$ whenever $p(y) \le p(x),$ we
obtain that
\[I_2 \le \fint_B
(\beta |g_2(y)|)^{p(y)}\, d\mu(y) \le \fint_B (|g_2(y)|)^{p(y)}\, 
d\mu(y) \le \fint_B
|g(y)|^{p(y)}\, d\mu(y).\]
Finally, for $I_3$ we get with Jensen’s inequality that
\[I_3 \le  \fint_B
(\beta|g(y)|)^{p(x)} \chi_{\{y\in B: |g(y)|\le 1,p(y)>p(x)\}}\,  d\mu(y).\]
Now, Young’s inequality (see e.g. \cite[Lemma 3.2.15]{DHHR}), the definition of $q(x, y)$ and $\beta\le \gamma$,
give that
\begin{align*}I_3 & \le   \fint_B\left( \left( \frac{\beta |g(y)|}{\gamma}
\right)^{p(y)}+ \gamma^{q(x,y)} \right)
\chi_{\{y\in B: |g(y)|\le 1, p(y)>p(x)\}}\, d\mu(y) \\ 
&\le  \fint_B
|g(y)|^{p(y)}\, d\mu(y) +  \fint_B
\gamma^{q(x,y)}  d\mu(y). \end{align*}

This proves inequality \eqref{eq: variableJensen} for $f_0$ and $f_1$. To get that inequality for $f,$ taking into account that $t\mapsto t^{p(x)}$ is a convex function, we argue
\[\left(\frac{\beta}{6}\fint_B|f(y)|\, d\mu(y)\right)^{p(x)}\le \frac{1}{2}\left[\left(\frac{\beta}{3}\fint_B f_0(y)\, d\mu(y)\right)^{p(x)}+\left(\frac{\beta}{3}\fint_B f_1(y)\, d\mu(y)\right)^{p(x)}\right].\]
Then, by applying the lemma for $f_0$ and $f_1$ and taking into account that $f_j\le |f|$ for $j=0,1$, we prove this lemma for $f$ as well by choosing $\delta=\frac{\beta}{6}.$
\end{proof}

The following lemma is immediate, and will be used in the proof of Theorem \ref{tmh: maximal} for the case $\mu(X)=\infty.$

\begin{lem}\label{lem: A4}

Let $q$ be the exponent defined in Lemma \ref{lem: variableJensen} and define a new exponent $s:X\to [1,+\infty]$ by \[\frac{1}{s(x)}=\left|\frac{1}{p(x)}-\frac{1}{p_\infty}\right|,\] for some constant $p_\infty\in [1,\infty)$. Then
\[t^{q(x,y)}\le t^{\frac{s(x)}{2}}+t^{\frac{s(y)}{2}}\] for every $t\in [0,1].$
\end{lem}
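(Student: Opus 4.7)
The plan is to reduce the inequality to a simple triangle-type estimate on reciprocals of the exponents, followed by a monotonicity argument for $t\mapsto t^\alpha$ on $[0,1]$. First I would dispose of the trivial case in which $\frac{1}{p(x)}\le\frac{1}{p(y)}$: then $q(x,y)=+\infty$ and, by the convention $t^\infty=0$ for $t\in[0,1)$, the left-hand side vanishes while the right-hand side is non-negative; at $t=1$ both sides are bounded by $2$ trivially.

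Next, I would assume $q(x,y)<\infty$, i.e.\ $\frac{1}{p(x)}>\frac{1}{p(y)}$, so that $\frac{1}{q(x,y)}=\frac{1}{p(x)}-\frac{1}{p(y)}$. The key observation is that by adding and subtracting $\frac{1}{p_\infty}$ and applying the triangle inequality,
\begin{equation*}
\frac{1}{q(x,y)}=\left(\frac{1}{p(x)}-\frac{1}{p_\infty}\right)-\left(\frac{1}{p(y)}-\frac{1}{p_\infty}\right)\le \left|\frac{1}{p(x)}-\frac{1}{p_\infty}\right|+\left|\frac{1}{p(y)}-\frac{1}{p_\infty}\right|=\frac{1}{s(x)}+\frac{1}{s(y)}.
\end{equation*}
From this it follows that $\max\{1/s(x),1/s(y)\}\ge \frac{1}{2q(x,y)}$, hence $\min\{s(x),s(y)\}\le 2q(x,y)$, i.e., $\min\{s(x)/2,s(y)/2\}\le q(x,y)$.

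Finally, since $t\in[0,1]$ and $\alpha\mapsto t^\alpha$ is non-increasing in $\alpha\ge 0$, I would conclude
\begin{equation*}
t^{q(x,y)}\le t^{\min\{s(x)/2,\,s(y)/2\}}=\max\left\{t^{s(x)/2},t^{s(y)/2}\right\}\le t^{s(x)/2}+t^{s(y)/2},
\end{equation*}
which is the desired bound. No serious obstacle is expected: this is really just the triangle inequality combined with the monotonicity of $t^\alpha$ on $[0,1]$, which is why the authors flag it as immediate. The only point that requires a touch of care is keeping track of the possibility that $s(x)$ or $s(y)$ equals $+\infty$ (when $p(x)=p_\infty$ or $p(y)=p_\infty$); in that situation one interprets $t^{s(\cdot)/2}=0$ for $t\in[0,1)$ and the inequality still holds since the remaining term controls $t^{q(x,y)}$ via the same reciprocal estimate.
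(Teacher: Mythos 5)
Your proof is correct and uses precisely the argument the paper treats as immediate: split on whether $q(x,y)$ is finite, apply the triangle inequality to get $\frac{1}{q(x,y)}\le\frac{1}{s(x)}+\frac{1}{s(y)}$, deduce $\min\{s(x)/2,s(y)/2\}\le q(x,y)$, and use that $\alpha\mapsto t^\alpha$ is non-increasing for $t\in[0,1]$. The edge cases ($q(x,y)=\infty$, $s(x)$ or $s(y)=\infty$, $t=0$ or $t=1$) are handled appropriately with the stated conventions.
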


By combining Lemmas \ref{lem: variableJensen} and \ref{lem: A4}, the following result can be deduced.

\begin{thm}\label{thm: variableJensen2}
Let $p\in \mathcal{P}_\mu(X).$ Then for every $\gamma\in(0,1)$ there exists $\delta\in (0,1)$ such that 
\begin{align*}
\left(\delta\fint_B|f(y)|\, d\mu(y)\right)^{p(x)}\le \fint_B |f(y)|^{p(y)}\, d\mu(y)+\fint_B \left(\gamma^{\frac{s(x)}{2}}+\gamma^{\frac{s(y)}{2}}\right)\, d\mu(y),
\end{align*}
for every ball $B$ in $X,$ $\mu$-a.e. $x\in B,$ $f\in L^{p(\cdot)}(\mu)$ with $\|f\|_{p(\cdot),\mu}\le \frac{1}{2},$ being $s(\cdot)$ as in Lemma \ref{lem: A4}.
\end{thm}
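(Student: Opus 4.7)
The plan is to obtain the statement as a direct combination of the two preceding lemmas, with essentially no new work. First I would invoke Lemma \ref{lem: variableJensen} for the prescribed $\gamma\in(0,1)$, producing a constant $\delta\in(0,1)$ (the same $\delta$ that will appear in the conclusion) for which
\[
\left(\delta\fint_B|f(y)|\, d\mu(y)\right)^{p(x)}\le \fint_B |f(y)|^{p(y)}\, d\mu(y)+\fint_B \gamma^{q(x,y)}\, d\mu(y),
\]
for every ball $B\subset X$, $\mu$-a.e.\ $x\in B$, and every $f\in L^{p(\cdot)}(\mu)$ with $\|f\|_{p(\cdot),\mu}\le 1/2$. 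Note that the hypotheses on $p$ and $f$ in Theorem \ref{thm: variableJensen2} are exactly those required by Lemma \ref{lem: variableJensen}, so no extra verification is needed.

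Second, I would apply Lemma \ref{lem: A4} pointwise with $t=\gamma\in[0,1]$ to get
\[
\gamma^{q(x,y)}\le \gamma^{s(x)/2}+\gamma^{s(y)/2},
\]
for all $x,y\in X$ (interpreting $\gamma^{\infty}=0$ as in Lemma \ref{lem: variableJensen}, consistent with $s(x)=\infty$ when $p(x)=p_\infty$). Integrating this inequality in $y$ against $d\mu(y)/\mu(B)$ and substituting into the bound from Lemma \ref{lem: variableJensen} yields
\[
\left(\delta\fint_B|f(y)|\, d\mu(y)\right)^{p(x)}\le \fint_B |f(y)|^{p(y)}\, d\mu(y)+\fint_B \left(\gamma^{s(x)/2}+\gamma^{s(y)/2}\right)\, d\mu(y),
\]
which is exactly the desired estimate. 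There is no real obstacle here; the only minor point to track is that the $\mu$-null exceptional set on which the inequality may fail is the same one coming from Lemma \ref{lem: variableJensen} (the set where $p$ is ill-defined or infinite), so the $\mu$-a.e.\ qualifier transfers without modification.
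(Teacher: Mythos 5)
Your argument is exactly the combination the paper intends: apply Lemma \ref{lem: variableJensen} to get the bound with $\gamma^{q(x,y)}$, then use Lemma \ref{lem: A4} with $t=\gamma$ to split $\gamma^{q(x,y)}\le\gamma^{s(x)/2}+\gamma^{s(y)/2}$ and integrate. The paper states the theorem as an immediate consequence of those two lemmas, so your proof matches it.
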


\begin{prop}\label{prop: Jensenmaximal}
Let $p\in \mathcal{P}_\mu(X).$ Then for every $\gamma\in (0,1)$ there exists $\delta\in (0,1)$ such that
\begin{equation}\label{eq: Jensenmaximal}(\delta \mathcal{M}_\mu f(x))^{p(x)}\le \mathcal{M}_{\mu}\left(|f|^{p(\cdot)}\right)(x)+2\mathcal{M}_\mu \left(\gamma^{\frac{s(\cdot)}{2}}\right)(x),\end{equation}
for all $f\in L^{p(\cdot)}(\mu)$ with $\|f\|_{p(\cdot),\mu}\le \frac{1}{2},$ $\mu$-a.e. $x\in X,$ being $s(\cdot)$ as in Lemma \ref{lem: A4}.
\end{prop}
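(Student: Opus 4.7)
Proof proposal. I plan to obtain \eqref{eq: Jensenmaximal} by passing to the supremum over admissible balls in the inequality supplied by Theorem~\ref{thm: variableJensen2}. Given $\gamma\in(0,1)$, let $\delta\in(0,1)$ be the constant produced by that theorem and fix $f\in L^{p(\cdot)}(\mu)$ with $\|f\|_{p(\cdot),\mu}\le 1/2$. For $\mu$-a.e.\ $x\in X$ and every ball $B\in\mathcal F$ containing $x$, Theorem~\ref{thm: variableJensen2} and the fact that $\gamma^{s(x)/2}$ is independent of $y$ give
\[\left(\delta\fint_B|f(y)|\,d\mu(y)\right)^{p(x)}\le \fint_B |f(y)|^{p(y)}\,d\mu(y)+\gamma^{s(x)/2}+\fint_B\gamma^{s(y)/2}\,d\mu(y).\]

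Since $t\mapsto t^{p(x)}$ is nondecreasing on $[0,\infty)$ for the fixed point $x$, taking the supremum over all $B\in\mathcal F$ with $B\ni x$ commutes with the exponentiation on the left-hand side and produces $(\delta\,\mathcal M_\mu f(x))^{p(x)}$. On the right-hand side, the first and third averages are majorized by $\mathcal M_\mu(|f|^{p(\cdot)})(x)$ and $\mathcal M_\mu(\gamma^{s(\cdot)/2})(x)$ respectively, while the middle term is unaffected. Consequently
\[(\delta\,\mathcal M_\mu f(x))^{p(x)}\le \mathcal M_\mu(|f|^{p(\cdot)})(x)+\gamma^{s(x)/2}+\mathcal M_\mu(\gamma^{s(\cdot)/2})(x).\]

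To finish, I need the pointwise bound $\gamma^{s(x)/2}\le\mathcal M_\mu(\gamma^{s(\cdot)/2})(x)$ valid for $\mu$-a.e.\ $x$, which would absorb the residual term and yield the factor $2$ in \eqref{eq: Jensenmaximal}. This is the Lebesgue-differentiation-type inequality $g(x)\le\mathcal M_\mu g(x)$ applied to the nonnegative locally integrable function $g=\gamma^{s(\cdot)/2}$. It is legitimate here because, as stressed in the preamble to the definition of $\mathcal M_\mu$, the countable collection $\mathcal F$ has been chosen so that the supremum over $\mathcal F$ coincides with the full non-centered maximal function; arbitrarily small balls through $x$ therefore participate in the supremum, and at every Lebesgue point of $\gamma^{s(\cdot)/2}$ the averages over such balls converge to the value at $x$. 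Inserting this bound into the previous display produces \eqref{eq: Jensenmaximal}.

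The only delicate point in this plan is the Lebesgue differentiation step, since $\mu$ is not assumed to be doubling and the classical Vitali-covering proof is unavailable. For the motivating case $\mu=\gamma_d$ the claim is immediate from the smoothness and positivity of the Gaussian density against Lebesgue measure, and in the abstract metric framework of Theorem~\ref{tmh: maximal} it is standard under the structural assumption that the supremum defining $\mathcal M_\mu$ is realized through $\mathcal F$ together with the finiteness of $\mu$ on balls. Everything else in the argument is a routine supremum manipulation built directly on Theorem~\ref{thm: variableJensen2}.
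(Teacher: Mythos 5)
Your proposal is correct and follows the same route as the paper: apply Theorem~\ref{thm: variableJensen2}, pass to the supremum over $B\in\mathcal F$ with $B\ni x$, and absorb the residual $\gamma^{s(x)/2}$ term into $\mathcal M_\mu(\gamma^{s(\cdot)/2})(x)$ via the almost-everywhere pointwise bound $g(x)\le\mathcal M_\mu g(x)$, which is exactly what the paper does in its one-line proof. Your caution about the Lebesgue-differentiation step is well placed (the paper invokes it without comment in a non-doubling setting), but it does not change the argument; otherwise the reasoning matches the paper's.
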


\begin{proof} 
From Theorem \ref{thm: variableJensen2} taking the supremum over all balls $B \in \mathcal{F}$ and using that
$\gamma^{\frac{s(x)}{2}}\le \mathcal{M}_\mu\left(\gamma^{\frac{s(\cdot)}{2}}\right)(x)$ for $\mu$-a.e. $x\in X,$ we get inequality \eqref{eq: Jensenmaximal}.
\end{proof}

Finally we prove the main result.

\begin{proof}[Proof of Theorem \ref{tmh: maximal}]
Since $p\in \mathcal{P}_\mu(X)$ with constant $c_\mu$, then the exponent $q:=\frac{p(\cdot)}{p^-}\in \mathcal{P}_\mu(X)$ with constant $c_\mu^{\frac{1}{p^-}}$ and $q^-=1.$ Let us take $f\in L^{p(\cdot)}(\mu)$ with $\|f\|_{p(\cdot),\mu}\le \frac{1}{2}.$

By applying inequality \eqref{eq: Jensenmaximal} from Proposition \ref{prop: Jensenmaximal} for $q$ we get
\begin{align*}
\left(\delta \mathcal{M}_\mu f(x)\right)^{p(x)}&=\left(\left(\delta \mathcal{M}_\mu f(x)\right)^{q(x)}\right)^{p^-}\\
&\le \left(\mathcal{M}_\mu (|f|^{q(\cdot)})(x)+2\mathcal{M}_{\mu}\left(\gamma^{\frac{s(\cdot)}{2}}\right)(x)\right)^{p^-}\\
&\lesssim \left(\mathcal{M}_\mu(|f|^{q(\cdot)} )(x)\right)^{p^-}+\left(\mathcal{M}_\mu\left(\gamma^{\frac{s(\cdot)}{2}} \right)(x)\right)^{p^-}. 
\end{align*}
Integrating over $X$ yields
\[\varrho_{p(\cdot),\mu}(\delta \mathcal{M}_\mu f)\lesssim \left\|\mathcal{M}_\mu\left(|f|^{q(\cdot)} \right)\right\|_{p^-,\mu}^{p^-}+\left\|\mathcal{M}_\mu\left(\gamma^{\frac{s(\cdot)}{2}} \right)\right\|_{p^-,\mu}^{p^-}.\]

If $\mu(X)<+\infty,$ we use that the maximal $\mathcal{M}_\mu$ is bounded on both $L^{p^-}(\mu)$ and $L^\infty(\mu)$ taking into account that $|f|^{q(\cdot)} \in L^{p^-}(\mu)$ and $\gamma^{\frac{s(\cdot)}{2}}\in L^\infty(\mu)$ for every $ \gamma\in (0,1)$ with $\|\gamma^{\frac{s(\cdot)}{2}}\|_{\infty,\mu} \le 1.$ Thus
\[\varrho_{p(\cdot),\mu}(\delta \mathcal{M}_\mu f)\lesssim \left\||f|^{q(\cdot)}\right\|_{p^-,\mu}^{p^-}+\mu(X)\lesssim 1+\mu(X)<+\infty.\]

If $\mu(X)=+\infty,$ since $1\in L^{s(\cdot)}(\mu)$ then there exists $\lambda>1$ such that 
\[\int_{X}\left(\frac{1}{\lambda}\right)^{s(y)}\, d\mu(y)<+\infty.\]
By taking $\gamma=\lambda^{-2}\in (0,1)$ we have that $\gamma^{\frac{s(\cdot)}{2}}\in L^1(\mu )\cap L^\infty (\mu)$ and hence $\gamma^{\frac{s(\cdot)}{2}}\in L^{p^-}(\mu).$ And since $\mathcal{M}_{\mu}$ is bounded on $L^{p^-}(\mu)$ we have
\[\varrho_{p(\cdot),\mu}(\delta \mathcal{M}_\mu f)\lesssim \left\||f|^{q(\cdot)}\right\|_{p^-,\mu}^{p^-}+\left\|\gamma^{\frac{s(\cdot)}{2}}\right\|_{p^-,\mu}^{p^-}\lesssim 1+\left\|\gamma^{\frac{s(\cdot)}{2}}\right\|_{p^-,\mu}^{p^-}<+\infty.\]
And with this we end the proof of this theorem.
\end{proof}

\bigskip

We are now interested in giving sufficient pointwise conditions on $p(\cdot)$ such that $p\in \mathcal{P}_{\gamma_d}(\mathbb R^d)$ holds. 

Condition $p\in \mathcal{P}_{\mu}(X)$ is a generalization of Diening's geometric condition \eqref{eq: Diening-geom} when $\mu$ is the Lebesgue measure and $(X,d)=(\mathbb R^d,|\cdot|)$. However, it is not necessarily true that this is equivalent to the local log-H\"older condition $LH_0(\mathbb R^d)$ for every measure, see \cite[Lemma 3.6]{HHP}. 

From now on, for a given ball $B$ of radius $r_B>0$, we denote by $q_B$ the point in the closure of $B$ whose distance to the origin is minimal, i.e., $q_B\in \overline{B}$ and $|q_B|=\text{dist}(0,B)$.

The next lemma is technical and although it can be found as a partial result in the proof of \cite[Lemma~1]{FSSU} we are including it here for the sake of completeness. 

\begin{lem}[\cite{FSSU}]\label{lem: lowerboundgamma}Let $B$ be a ball of $\mathbb{R}^d$ of radius $r_B>0$, and let $q_B$ as defined before. If $|q_B|\ge 1$ and $r_B\ge  1/(4|q_B|)$, then
  \begin{equation*}
    \gamma_d(B)\ge C \frac{e^{-|q_B|^2}}{|q_B|}\left(1\wedge \left(\frac{r_B}{|q_B|}\right)^{\frac{d-1}{2}}\right),
  \end{equation*}
where $C$ does not depend on $B$.
\end{lem}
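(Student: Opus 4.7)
The plan is to exhibit an explicit subset $E\subset B$ on which the Gaussian weight is bounded below by a multiple of $e^{-|q_B|^2}$ and whose Lebesgue measure already realises the desired lower bound. Since $|q_B|\ge 1$, set $e_1=q_B/|q_B|$; the minimality of $|q_B|$ forces the centre of $B$ to be $x_B=(|q_B|+r_B)e_1$. Writing $y=q_B+u_1 e_1+u'$ with $u'\perp e_1$, a direct calculation gives
\[|y|^2=|q_B|^2+2|q_B|u_1+u_1^2+|u'|^2,\qquad y\in B\iff |u'|^2\le u_1(2r_B-u_1).\]

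Set $\delta=1/(4|q_B|)$, which satisfies $\delta\le r_B$ by hypothesis, and let $\rho=\min\bigl(1,\sqrt{\delta r_B/2}\bigr)$. I would take
\[E:=\bigl\{y=q_B+u_1e_1+u':u_1\in[\delta/2,\delta],\ |u'|\le\rho\bigr\}.\]
For every $u_1\in[\delta/2,\delta]$ one has $u_1(2r_B-u_1)\ge(\delta/2)(2r_B-\delta)\ge\delta r_B/2\ge\rho^2$, so $E\subset B$. Moreover, on $E$, using $|q_B|\ge 1$,
\[|y|^2\le|q_B|^2+2|q_B|\delta+\delta^2+\rho^2\le|q_B|^2+\tfrac12+\tfrac{1}{16|q_B|^2}+1\le|q_B|^2+2,\]
so the Gaussian density satisfies $e^{-|y|^2}\ge e^{-|q_B|^2-2}$ uniformly on $E$.

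Next, $|E|\gtrsim\delta\rho^{d-1}$, and two regimes arise. If $\sqrt{\delta r_B/2}\le 1$, equivalently $r_B\le 8|q_B|$, then $\rho^{d-1}\approx(r_B/|q_B|)^{(d-1)/2}$, giving $|E|\gtrsim|q_B|^{-1}(r_B/|q_B|)^{(d-1)/2}$; otherwise $\rho=1$ and $|E|\gtrsim|q_B|^{-1}$, which suffices because in that regime $(r_B/|q_B|)^{(d-1)/2}\ge 1$, so the factor $1\wedge(r_B/|q_B|)^{(d-1)/2}$ in the claim equals $1$. In either case
\[|E|\gtrsim\frac{1}{|q_B|}\left(1\wedge\Bigl(\frac{r_B}{|q_B|}\Bigr)^{(d-1)/2}\right),\]
and multiplying by $\pi^{-d/2}e^{-|q_B|^2-2}$ delivers the claim.

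The main subtlety is choosing the height $\rho$: it must stay below an absolute constant so that $2|q_B|u_1+|u'|^2$ does not exceed $O(1)$ on $E$ (otherwise the $e^{-|q_B|^2}$ factor would be destroyed when $r_B\gg|q_B|$), yet be as large as the spherical geometry of $B$ permits so that $|E|$ captures the correct $(r_B/|q_B|)^{(d-1)/2}$ power in the narrow-ball regime. The $\min$ in the conclusion encodes precisely this trade-off, and the hypothesis $r_B\ge 1/(4|q_B|)$ is exactly what allows us to pick $\delta\le r_B$ in the first step.
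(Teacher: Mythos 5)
Your proof is correct and follows essentially the same geometric idea as the paper's: restrict attention to a thin slab of thickness $\asymp 1/|q_B|$ just beyond $q_B$ in the radial direction, observe that the $(d-1)$-dimensional cross-sections of $B$ there have width $\asymp\sqrt{r_B/|q_B|}$, and use that the Gaussian weight is comparable to $e^{-|q_B|^2}$ throughout. Your version is a little more careful in choosing $\delta=1/(4|q_B|)\le r_B$ and capping the transverse radius at $\rho=\min(1,\sqrt{\delta r_B/2})$ so that the cylinder $E$ genuinely sits inside $B$ for all radii allowed by the hypothesis, which tightens up a borderline case the paper's sketch passes over quickly.
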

\begin{proof}
Consider the hyperplane orthogonal to $q_B$ whose distance from
the origin is $|q_B|+ t$, with $\frac{1}{2|q_B|}< t < \frac{1}{|q_B|}.$  Its intersection with $B$ is a $(d-1)$-dimensional ball whose radius is at least $ C\sqrt{r_B t}\ge \widetilde{C}\sqrt{r_B/|q_B|}.$ 
Integrating the Gaussian density first along this $(d-1)$-dimensional  ball and then in $t$, we get
\[\gamma_d(B)\ge \int_{1/(2|q_B|)}^{1/|q_B|}e^{-(|q_B|+t)^2}\int_{|v|< \widetilde{C}\sqrt{r_B/|q_B|}} e^{-|v|^2}dv dt\]
where $v$ is a $(d-1)$-dimensional variable. The inner integral here is at least
$C \left(1\wedge (r_B/|q_B|)^{(d-1)/2)}\right)$, and $e^{-(|q_B|+t)^2 }\ge C e^{-|q|^2}$ for these $t$. Therefore
\[\gamma_d(B)\ge C \frac{e^{-|q_B|^2}}{|q_B|}\left(1\wedge \left(\frac{r_B}{|q_B|}\right)^{\frac{d-1}{2}}\right).\qedhere\]
\end{proof}
Now we are in position to give sufficient conditions for the validity of $p\in \mathcal{P}_{\gamma_d}(\mathbb R^d)$ and, consequently, for the boundedness of $\mathcal M_{\gamma_d}$.

\begin{lem}Let $p\in  LH_0(\mathbb R^d)$ be given and assume that there exists a constant $C_{\gamma_d}$ such that 
\begin{equation}\label{eq: maxdifp}
	p_B^+-p_B^-\le \frac{C_{\gamma_d}}{|q_B|^2}
\end{equation}	
for every ball $B$. Then $p\in \mathcal{P}_{\gamma_d}(\mathbb R^d)$.
\end{lem}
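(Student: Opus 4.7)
The plan is to establish that $\gamma_d(B)^{p_B^+ - p_B^-} \ge c_{\gamma_d}$ for all balls $B$, which, since $\gamma_d(B) \le 1$ (as $\gamma_d$ is a probability measure), is equivalent to showing that the product $(p_B^+ - p_B^-) \cdot (-\log \gamma_d(B))$ is uniformly bounded. I would fix a threshold $R_0 \ge 1$ and split balls into cases based on $|q_B|$ and $r_B$, combining Diening's geometric characterization of $LH_0(\mathbb{R}^d)$, condition \eqref{eq: maxdifp}, and Lemma \ref{lem: lowerboundgamma}.

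\textbf{Case A: $|q_B| < R_0$} (balls close to the origin). Condition \eqref{eq: maxdifp} is useless here, so I would rely solely on $LH_0$. If $r_B \le 1$, then $B \subset B(0, R_0 + 2)$, on which the Gaussian density is uniformly bounded below, so $\gamma_d(B) \gtrsim |B|$; then Diening's inequality $|B|^{p_B^+ - p_B^-} \ge c$ takes care of the small-$|B|$ subcase, while for $|B|$ bounded below one gets $|\log \gamma_d(B)| \le C$ directly. If instead $r_B > 1$, then moving a distance $1/2$ from $q_B$ in the direction of the center $x_B$ produces a point $y \in B$ with $|y| \le R_0 + 1/2$ and $B(y, 1/2) \subset B$; thus $\gamma_d(B) \gtrsim 1$, bounding $-\log \gamma_d(B)$ by a constant.

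\textbf{Case B: $|q_B| \ge R_0$} (balls far from the origin). Here I would use \eqref{eq: maxdifp} in the form $p_B^+ - p_B^- \le C_{\gamma_d}/|q_B|^2$ together with sharp lower bounds on $\gamma_d(B)$. For $r_B \le 1/(4|q_B|)$ a direct estimate shows $|y|^2 = |q_B|^2 + O(1)$ on $B$, so $\gamma_d(B) \thickapprox e^{-|q_B|^2}|B|$, hence
\[-\log \gamma_d(B) \le |q_B|^2 - \log |B| + C.\]
Multiplying by $p_B^+ - p_B^-$, the first term contributes at most $C_{\gamma_d}$ by \eqref{eq: maxdifp}, the $-\log|B|$ term is controlled by $LH_0$ when $|B| \le 1$ and is non-positive otherwise, and the last is at most $Cp^+$. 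For $r_B > 1/(4|q_B|)$, Lemma \ref{lem: lowerboundgamma} applies and gives
\[-\log \gamma_d(B) \le |q_B|^2 + \log |q_B| + \tfrac{d-1}{2}\log^+(|q_B|/r_B) + C.\]
The inequality $r_B > 1/(4|q_B|)$ implies $|q_B|/r_B < 4|q_B|^2$, so $\log^+(|q_B|/r_B) \lesssim \log|q_B|$. Multiplying by $p_B^+ - p_B^- \le C_{\gamma_d}/|q_B|^2$, the dominant term is exactly $C_{\gamma_d}$, while the others vanish as $|q_B| \to \infty$.

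\textbf{Main obstacle.} The delicate point is that the hypothesis \eqref{eq: maxdifp} carries no information for balls with $|q_B|$ small (it degenerates to $p_B^+ - p_B^- \le \infty$), and $LH_0$ alone cannot handle balls of large Lebesgue volume whose Gaussian measure is nonetheless tiny. Thus the argument must carve out a bounded region around the origin on which the Gaussian and Lebesgue measures are comparable (letting $LH_0$ do all the work) and, in the complementary region, combine \eqref{eq: maxdifp} with the Gaussian size estimate from Lemma \ref{lem: lowerboundgamma} to cancel the dangerous factor $e^{|q_B|^2}$. The small-ball-far-from-origin subcase is the one where both hypotheses must be used simultaneously.
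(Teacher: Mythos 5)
Your proposal is correct and follows essentially the same route as the paper: split according to the size of $|q_B|$ and $r_B$, obtain Gaussian lower bounds for $\gamma_d(B)$ (elementary density comparison for small balls, Lemma~\ref{lem: lowerboundgamma} for large balls far from the origin, an inner-ball argument near the origin), and combine these with Diening's characterization of $LH_0$ and the hypothesis \eqref{eq: maxdifp}. The only differences are cosmetic thresholds ($R_0$ in place of $1$, $1/(4|q_B|)$ in place of $1/|q_B|$) and that the paper absorbs the $\log|q_B|$ term into the exponent as $e^{-(d+1)|q_B|^2}$ rather than arguing it is $o(1)$.
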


\begin{proof}
	Let $B$ be a ball of center $c_B$ and radius $r_B>0.$ Then, there exists $0<c<1$ such that
	\begin{equation}\label{eq: lowerboundsgammaB}
		\left\{\begin{array}{lcl}
			\gamma_d(B)\ge c\, e^{-|q_B|^2}|B|&\text{if}&  r_B\le 1\wedge \frac{1}{|q_B|}\\
			\gamma_d(B)\ge c &\text{if}&|q_B|<1 \text{ and } r_B>1\\
			\gamma_d(B)\ge c\,  e^{-(d+1)|q_B|^2}&\text{if}&|q_B|\ge 1   \text{ and } r_B>\frac{1}{|q_B|}
		\end{array}\right. .
	\end{equation}
	Indeed, if $ r_B\le 1\wedge \frac{1}{|q_B|}$ and taking into account that for $y\in B,$ $|y|<|q_B|+2r_B,$ then \[\gamma_d(B)=\frac{1}{\pi^{d/2}}\int_B e^{-|y|^2}\, dy\ge \frac{1}{\pi^{d/2}} e^{-(|q_B|+2r_B)^2}|B|\ge \frac{e^{-8}}{\pi^{d/2}}e^{-|q_B|^2}|B|.\]
	If we are in the case $|q_B|<1$ and $r_B> 1,$ first assume that $|c_B|\le 1.$ Then $B(c_B,1)\subset B= B(c_B,r_B)$ and, thus, $\gamma_d(B)\ge \gamma_d(B(c_B,1))$. In this case, we also have $|y|\le 2$ for every $y\in B(c_B,1)$,  hence $\gamma_d(B)\ge e^{-4}(\omega_{d}/\pi^{d/2}).$ On the other hand, if $|c_B|>1,$ then $B\left(q_B+\frac{1}{|c_B|}c_B,1\right)\subset B,$ and for every $y\in B\left(q_B+\frac{1}{|c_B|}c_B,1\right)$, $|y|\le 3$. Hence, $\gamma_d(B)\ge e^{-9}(\omega_d/\pi^{d/2}).$
	
	Now  let us consider the case $|q_B|\ge 1$ and $r_B>\frac{1}{|q_B|}$. Then, from Lemma \ref{lem: lowerboundgamma}, we know that
	\begin{align*}\gamma_d(B)&\ge c\, \frac{e^{-|q_B|^2}}{|q_B|}\left(1\wedge\left(\frac{r_B}{|q_B|}\right)^{\frac{d-1}{2}}\right) \\ &\ge c\,  \frac{e^{-|q_B|^2}}{|q_B|}\left(1\wedge\left(\frac{1}{|q_B|^2}\right)^{\frac{d-1}{2}}\right) \\ &=\ c\, \frac{e^{-|q_B|^2}}{|q_B|^d}=c\, e^{-(|q_B|^2+d\log |q_B|)}\\ &\ge c\, e^{-(d+1)|q_B|^2}.
	\end{align*} 
	This finishes the proof of \eqref{eq: lowerboundsgammaB}. 
	
	Now, taking into account these estimates and condition \eqref{eq: maxdifp}, it can be easily seen that there exists a constant $c$ independent of $B$ such that \[\gamma_d(B)^{p^+_B-p^-_B}\ge c\] for every ball $B$ in $\mathbb{R}^d.$ The assumption $p\in LH_0(\mathbb R^d)$ is needed to estimate the case $r_B\le 1\wedge \frac{1}{|q_B|}$.
\end{proof}

Condition \eqref{eq: maxdifp} is actually equivalent to $\mathcal{P}_{\gamma_d}^\infty$, the condition considered at previous sections for the Gaussian Riesz transforms. We have the following lemma.

\begin{lem}Let $p\in \mathcal{P}(\mathbb R^d,\gamma_d)$. The following conditions are equivalent.
	\begin{enumerate}[label=(\roman*)]
		\item $p$ verifies \eqref{eq: maxdifp}; 
		\item $p\in \mathcal{P}_{\gamma_d}^\infty$, that is, for some constant $p_\infty\in [1,\infty)$, there exists $C_{\gamma_d}>0$ such that
		\begin{equation}\label{eq: Pinfty}
			|p(x)-p_\infty|\le \frac{C_{\gamma_d}}{|x|^2}, \quad \forall\, x\in \mathbb R^d\setminus \{(0,\dots, 0)\};
		\end{equation}
	    \item $p$ satisfies the inequality	\begin{equation}\label{eq: infdecay}
	    	|p(x)-p(y)|\le \frac{\widetilde{C_{\gamma_d}}}{|x|^2}, \quad \forall |y|\ge |x|,
	    \end{equation}
	    for some $\widetilde{C_{\gamma_d}}>0$.
	\end{enumerate}
\end{lem}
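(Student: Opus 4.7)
The plan is to prove the cycle (ii) $\Rightarrow$ (i) $\Rightarrow$ (iii) $\Rightarrow$ (ii). The first and third implications will be immediate triangle-inequality arguments, while the middle one, (i) $\Rightarrow$ (iii), is the substantive step and calls for a geometric chaining argument on the sphere of radius $|x|$.

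For (ii) $\Rightarrow$ (i), on any ball $B$ not containing the origin and any $x,y\in B$, the inequality $|x|,|y|\ge |q_B|$ combined with (ii) and the triangle inequality gives $|p(x)-p(y)|\le 2C_{\gamma_d}/|q_B|^2$, which is (i); when $0\in B$, (i) is vacuous. For (iii) $\Rightarrow$ (ii), any sequence $|y_n|\to\infty$ is made Cauchy by applying (iii) to the pair with smaller norm, so the limit $\lim_{|y|\to\infty}p(y)=:p_\infty$ exists; fixing $x\neq 0$ and letting $|y|\to\infty$ in (iii) then recovers (ii).

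For the key step (i) $\Rightarrow$ (iii), I would fix $x,y$ with $|y|\ge|x|>0$, introduce $z:=|x|\,y/|y|$ on the sphere $\{|w|=|x|\}$, and split $|p(x)-p(y)|\le |p(x)-p(z)|+|p(z)-p(y)|$. For the radial piece, the ball $B$ having $y$ and $z$ as antipodes of a diameter is centered at $(y+z)/2$ with radius $(|y|-|x|)/2$; since $y$, $z$ and $0$ are collinear, a direct computation gives $|q_B|=|x|$, and (i) then yields $|p(y)-p(z)|\le C_{\gamma_d}/|x|^2$. For the angular piece, I would connect $z$ and $x$ on the sphere of radius $|x|$ by a chain $z=w_0,w_1,\ldots,w_N=x$ with $|w_i-w_{i+1}|\le |x|/2$, where $N$ depends only on the dimension $d$ (this reduces by scaling to the elementary fact that $S^{d-1}$ can be covered by finitely many balls of radius $1/2$). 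The ball $B_i:=B(w_i,|x|/2)$ contains both $w_i$ and $w_{i+1}$ and has $|q_{B_i}|\ge|w_i|-|x|/2=|x|/2$, so (i) gives $|p(w_i)-p(w_{i+1})|\le 4C_{\gamma_d}/|x|^2$; summing over $i$ produces $|p(x)-p(z)|\le 4NC_{\gamma_d}/|x|^2$, and combining the two pieces gives (iii) with constant $(4N+1)C_{\gamma_d}$.

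The main obstacle is the angular chaining: one must ensure that every ball in the chain keeps its distance to the origin comparable to $|x|$. The decisive observation is that the step length $|x|/2$ is proportional to the sphere's radius, so the number of steps $N$ is dimension-only and the accumulated error remains of order $1/|x|^2$ rather than growing with $|x|$. A secondary point to watch is that $p$ is only measurable, so the pointwise statements in (ii) and (iii) should be read $\gamma_d$-a.e., compatibly with the essential-sup/inf formulation of (i).
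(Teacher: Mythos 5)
Your proof is correct, and the overall structure (a cycle through (i), (ii), (iii)) matches the paper's; (ii)$\Rightarrow$(i) and (iii)$\Leftrightarrow$(ii) are handled the same way. The interesting comparison is the hard step (i)$\Rightarrow$(iii). You decompose $y$ into a radial projection $z=|x|\,y/|y|$ plus an angular displacement along the sphere $\{|w|=|x|\}$, treat the radial leg with a single ball having $z$ and $y$ as antipodal points (so $q_B=z$), and then chain along the sphere with $N$ balls of radius $|x|/2$, each of which keeps $|q_{B_i}|\ge |x|/2$. The paper instead fixes a half-space $H_0=\{z:x\cdot(z-x)\ge 0\}$, constructs for each $y\in H_0$ an explicit ball $B=B(\lambda x,(\lambda-1)|x|+1/|x|)$ containing $x$ and $y$ with $q_B$ at distance $|x|-1/|x|$ from the origin, and then covers the remaining directions by rotating this half-space and chaining through at most three intermediate points; this forces the side condition $|x|>\sqrt2$ (with the bounded case handled trivially). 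Your sphere-covering argument has the advantage of working uniformly for all $|x|>0$ without a case split and without the explicit $\lambda$ computation; the paper's construction has the advantage of an explicit, small, dimension-free constant ($12\,C_{\gamma_d}$ or $16\,C_{\gamma_d}$), while yours yields $(4N+1)C_{\gamma_d}$ with $N$ a covering number. Two small points worth tightening: the step size in the chain should be strictly less than $|x|/2$ so that $w_{i+1}$ lies in the open ball $B(w_i,|x|/2)$, and — as you already noted — since $p$ is only measurable, the pointwise comparison between $|p(w_i)-p(w_{i+1})|$ and $p_{B_i}^+-p_{B_i}^-$ only holds for a.e.\ choice of the chain points; the paper's proof carries the same implicit a.e.\ caveat.
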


\begin{proof}It is easy to see that condition $p\in \mathcal{P}_{\gamma_d}^\infty$ is equivalent to the condition \eqref{eq: infdecay} where $p_\infty$ happens to be the limit of $p(x)$ as $|x|\rightarrow\infty $, uniformly in all directions. 

With this in mind, we will then prove \eqref{eq: maxdifp} $\Rightarrow$ \eqref{eq: infdecay} and \eqref{eq: Pinfty} $\Rightarrow$ \eqref{eq: maxdifp}. 

Assume \eqref{eq: maxdifp} holds. Take $|x|>\sqrt{2}$ (for $|x|\le \sqrt{2}$ the proof is immediate). Let us define $q_x=\left(|x|-\frac{1}{|x|}\right)\frac{x}{|x|}$ and the hyperspace \[H_0=\{z\in \mathbb{R}^d:x\cdot (y-x)\ge 0\}.\]
		For $y\in H_0$ we have $x\cdot y\ge |x|^2$ and it is easy to check that $|y|\ge |x|.$ Now we can choose a ball $B$ with $q_B=q_x$ and $x,y\in B.$ Indeed, $B=B(c_B,r_B)$ is chosen in such a way that $c_B=\lambda x$ for some $\lambda >1$ and radius $r_B=|q_x-c_B|=(\lambda -1)|x|+\frac{1}{|x|}.$ It is immediate that $|x-c_B|<r_B.$ The parameter $\lambda$ will be chosen greater than $1$ and depending on $x$ and $y$ subject to the condition $|y-c_B|<r_B.$ That is, $\lambda >\frac{2+|y|^2-|x|^2+\frac{1}{|x|^2}}{2(1+x\cdot y-|x|^2)}.$ Then, taking into account \eqref{eq: maxdifp} we have
		\begin{equation*}
			|p(y)-p(x)|\le p_B^+-p_B^-\le \frac{C_{\gamma_d}}{|q_x|^2}\le \frac{4C_{\gamma_d}}{|x|^2},
		\end{equation*}
		since $|q_x|\ge \frac{|x|}{2}.$ Thus,   $|p(y)-p(x)|\le \frac{4C_{\gamma_d}}{|x|^2}, $ for every $y\in H_0.$
		
		Now let us fix an angle $\theta \in (-\pi, \pi)$ and consider $\rho_\theta$ a rotation of an angle $\theta$ about the origin. Let us call $q_\theta=\rho_\theta q_x$ and $x_\theta=\rho_\theta x$ and define \[H_\theta=\{z\in \mathbb{R}^d: x_\theta \cdot (z-x_\theta)\ge 0\}.\] Since the module of a vector in $\mathbb{R}^d$ is invariant under rotations, we have $|q_\theta|=|q_x|\ge \frac{|x|}{2}$ and $|x_\theta|=|x|.$ Now we apply the same procedure as before and we get that \[|p(y)-p(x_\theta)|\le \frac{4C_{\gamma_d}}{|x|^2}\] for every $y\in H_\theta.$
		
		Let us remark that $H_\theta \cap H_0\ne \emptyset$ if and only if $-\pi <\theta<\pi.$
		
		For $y\in \mathbb{R}^d$ such that $|y|\ge |x|,$ let $\theta$ be the angle between $x$ and $y$ such that $|\theta|<\pi,$ then $y\in H_\theta.$ Let $z\in H_\theta \cap H_0,$ then 
		\[|p(y)-p(x)|\le |p(y)-p(x_\theta)|+|p(x_\theta)-p(z)|+|p(z)-p(x)|\le \frac{12 C_{\gamma_d}}{|x|^2}.\]
		We have proved \eqref{eq: infdecay} for $y\notin (B(0,|x|)\cup \{\alpha x:\alpha\le -1\}).$ Since $\{\alpha x:\alpha\le -1\}\subset H_{\pi}$ then for $0<\theta<\pi,$ we have $H_\pi\cap H_\theta\ne \emptyset$ 
		and proceeding as before we get the estimate $(16C_{\gamma_d})/|x|^2$ for $y\in \{\alpha x:\alpha\le -1\}.$ This ends the proof of \eqref{eq: infdecay}.
		
Now, let us assume
\eqref{eq: Pinfty} holds. For $B$ a ball with center at $c_B$ and radius $r_B$ such that $q_B\ne 0,$ and $x\in B,$ we know that $|p(x)-p_\infty|\le \frac{C_{\gamma_d}}{|q_B|^2}$ and then $p_\infty -\frac{C_{\gamma_d}}{|q_B|}\le p_B^-\le p_B^+\le p_\infty+\frac{C_{\gamma_d}}{|q_B|^2}.$ From this we easily get condition \eqref{eq: maxdifp}.
\end{proof}

Combining the previous lemmas, we deduce the following fact.

\begin{cor}\label{cor: condRiesz=>max}Let $p\in  LH_0(\mathbb R^d)\cap P_{\gamma_d}^\infty(\mathbb R^d)$. Then, $p\in \mathcal{P}_{\gamma_d}(\mathbb R^d)$.
\end{cor}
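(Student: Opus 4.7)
The plan is to combine the two preceding lemmas directly, since the corollary is essentially a chaining of implications already established.

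First, I would use the second lemma (the equivalence statement) to translate the hypothesis $p \in \mathcal{P}_{\gamma_d}^\infty(\mathbb{R}^d)$, i.e., condition \eqref{eq: Pinfty}, into condition \eqref{eq: maxdifp}: the implication \eqref{eq: Pinfty} $\Rightarrow$ \eqref{eq: maxdifp} was proved at the very end of the second lemma by noting that on any ball $B$ with $q_B \neq 0$, every $x \in B$ satisfies $|p(x) - p_\infty| \le C_{\gamma_d}/|q_B|^2$, so that
\[ p_B^+ - p_B^- \le 2 \frac{C_{\gamma_d}}{|q_B|^2}. \]
For balls whose closure contains the origin (so $q_B = 0$), the bound is trivial because $p$ is bounded and $\gamma_d(B)$ is bounded below by a positive constant (such balls automatically have $r_B \ge |c_B|$, so in the cases of \eqref{eq: lowerboundsgammaB} one handles them directly). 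Thus $p$ verifies \eqref{eq: maxdifp}.

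Second, I would invoke the first lemma (the sufficient-conditions lemma): with $p \in LH_0(\mathbb{R}^d)$ by hypothesis and \eqref{eq: maxdifp} just established, that lemma gives the lower bound $\gamma_d(B)^{p_B^+ - p_B^-} \ge c$ uniformly over all balls $B \subset \mathbb{R}^d$, which is exactly the defining condition $p \in \mathcal{P}_{\gamma_d}(\mathbb{R}^d)$. Concretely, on small balls ($r_B \le 1 \wedge 1/|q_B|$) one uses $LH_0$ together with $\gamma_d(B) \gtrsim e^{-|q_B|^2}|B|$; on large balls near the origin the measure $\gamma_d(B)$ is bounded below by an absolute constant; and on distant large balls the lower bound $\gamma_d(B) \gtrsim e^{-(d+1)|q_B|^2}$ combines with \eqref{eq: maxdifp} to give $\gamma_d(B)^{p_B^+ - p_B^-} \gtrsim e^{-(d+1)C_{\gamma_d}}$.

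There is no real obstacle here: the content is in the two preceding lemmas, and the corollary is a one-line synthesis once their conclusions are put next to each other. The only minor care needed is to confirm that the small-ball case (handled by $LH_0$) and the case $q_B = 0$ (where \eqref{eq: maxdifp} is vacuous but $\gamma_d(B)$ is bounded below) both still fit within the scope of the first lemma's proof as written, which they do.
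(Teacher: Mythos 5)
Your proposal is correct and follows the paper's intended approach exactly: the paper's proof is the single line ``Combining the previous lemmas,'' and you simply spell out the chain $\mathcal{P}_{\gamma_d}^\infty(\mathbb{R}^d) \Rightarrow$ \eqref{eq: maxdifp} (via the equivalence lemma) and then \eqref{eq: maxdifp} plus $LH_0(\mathbb{R}^d) \Rightarrow \mathcal{P}_{\gamma_d}(\mathbb{R}^d)$ (via the sufficient-conditions lemma). Your side-remark about balls with $q_B=0$ is a sensible sanity check, consistent with how the preceding lemma's proof covers that case via $LH_0$ and the absolute lower bound on $\gamma_d(B)$ for large balls near the origin.
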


Since $\mathcal{M}_{\gamma_d}$ is bounded on $L^{p^-}(\gamma_d)$ and $\gamma_d(\mathbb{R}^d)<\infty,$ we can apply Corollary \ref{cor: condRiesz=>max} and Theorem \ref{tmh: maximal} in order to get  the continuity of $\mathcal{M}_{\gamma_d}$ on variable Lebesgue spaces with respect to $\gamma_d$ under the same sufficient conditions obtained  for the boundedness of the Gaussian Riesz transforms.

\begin{thm}Let $p\in LH_0(\mathbb{R}^d)\cap P_{\gamma_d}^\infty(\mathbb R^d)$ with $p^->1$. Then, there exists a constant $K>0$ such that
	\[\|\mathcal{M}_{\gamma_d}f\|_{p(\cdot),\gamma_d}\le K \|f\|_{p(\cdot),\gamma_d}\]
	for every $f\in L^{p(\cdot)}(\gamma_d)$.
\end{thm}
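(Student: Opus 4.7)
My plan is to observe that this final theorem is essentially a corollary of the machinery already developed in the section, so the task reduces to verifying that all hypotheses of Theorem \ref{tmh: maximal} are met with $\mu=\gamma_d$ and $X=\mathbb R^d$.

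First, I would invoke Corollary \ref{cor: condRiesz=>max} to translate the pointwise assumption $p\in LH_0(\mathbb R^d)\cap \mathcal{P}_{\gamma_d}^\infty(\mathbb R^d)$ into the geometric ball condition $p\in \mathcal{P}_{\gamma_d}(\mathbb R^d)$, i.e., the existence of $c_{\gamma_d}\in(0,1)$ with $\gamma_d(B)^{p_B^+-p_B^-}\ge c_{\gamma_d}$ uniformly in $B$. This is exactly the structural hypothesis required by Theorem \ref{tmh: maximal}.

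Next, because $\gamma_d(\mathbb R^d)=1<\infty$, the supplementary assumption $1\in L^{s(\cdot)}(\gamma_d)$ (needed only in the unbounded-measure case) is vacuous here, so I do not need to verify any integrability condition involving the auxiliary exponent $s(\cdot)$. The remaining ingredient is the boundedness of $\mathcal{M}_{\gamma_d}$ on $L^{p^-}(\gamma_d)$, which holds because $p^->1$ and the non-centered Gaussian maximal operator is known to be bounded on $L^q(\gamma_d)$ for every $1<q\le \infty$ by the result of Forzani--Scotto--Sjögren--Urbina cited earlier as \cite{FSSU}.

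With all three hypotheses verified, a direct application of Theorem \ref{tmh: maximal} yields the desired estimate $\|\mathcal{M}_{\gamma_d}f\|_{p(\cdot),\gamma_d}\lesssim \|f\|_{p(\cdot),\gamma_d}$. There is essentially no obstacle at this stage — all the real work went into proving Theorem \ref{tmh: maximal} (via Lemmas \ref{lem: A1}, \ref{lem: variableJensen}, \ref{lem: A4}, Theorem \ref{thm: variableJensen2}, and Proposition \ref{prop: Jensenmaximal}) and into establishing the equivalence of $\mathcal{P}_{\gamma_d}^\infty$ with the ball-difference condition \eqref{eq: maxdifp}. The only mild point to mention explicitly is that, since we are in the finite-measure setting, the proof of Theorem \ref{tmh: maximal} ends with the $L^{p^-}$-boundedness of $\mathcal{M}_{\gamma_d}$ applied to $|f|^{q(\cdot)}$ together with $\mathcal{M}_{\gamma_d}\gamma^{s(\cdot)/2}\in L^\infty$, both controlled by $\gamma_d(\mathbb R^d)=1$, so no Nekvinda-type integrability is invoked.
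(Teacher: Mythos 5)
Your proposal matches the paper's own argument precisely: the theorem is obtained by combining Corollary \ref{cor: condRiesz=>max} (which converts the pointwise hypotheses into the ball condition $p\in \mathcal P_{\gamma_d}(\mathbb R^d)$) with Theorem \ref{tmh: maximal}, using that $\gamma_d$ is a finite measure so the Nekvinda-type condition is not needed, and that $\mathcal M_{\gamma_d}$ is bounded on $L^{p^-}(\gamma_d)$ for $p^->1$ by \cite{FSSU}. Nothing is missing and the route is identical to the paper's.
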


%\section*{Acknowlegdements}
%We are greatly appreciative to the referee for his/her appropriated suggestions.

\bibliographystyle{abbrv}
%\bibliography{DS}

\end{document}